\newtheorem{theorem}{Theorem}[section]
\newtheorem{lemma}[theorem]{Lemma}
\newtheorem{proposition}[theorem]{Proposition}
\newtheorem{corollary}[theorem]{Corollary}
\theoremstyle{definition}
\newtheorem{defi}[theorem]{Definition}
\newtheorem{example}[theorem]{Example}
\theoremstyle{remark} \newtheorem{remark}[theorem]{Remark}
\definecolor{jaune}{HTML}{FCBB6D}
\definecolor{rouge}{HTML}{AB6C82}
\definecolor{noir}{HTML}{475C7A}
\numberwithin{equation}{section}
\newcommand{\abs}[1]{\lvert#1\rvert}
\newcommand{\fonction}[5]{#1 :\left\{\begin{array}{rcl} #2 &
\longrightarrow & #3\\ #4 & \longmapsto & #5\\
\end{array} \right.}
\newcommand{\ma}[1]{\mathbb #1}
\newcommand{\mc}[1]{\mathcal #1}
\begin{document}

\title{Flat traces for a random partially hyperbolic map} %

\date\today \author{Luc Gossart} \address{Institut Fourier, 100, rue
des maths BP74 38402 Saint-Martin d'Heres France\footnote{2010
Mathematics Subject Classification.  37D30 Partially hyperbolic
systems and dominated splittings, 37E10 Maps of the circle, 60F05
Central limit and other weak theorems, 37C30 Zeta functions,
(Ruelle-Frobenius) transfer operators, and other functional analytic
techniques in dynamical systems.  }}

\maketitle
\begin{abstract} We consider a $\mathbb R/\mathbb Z$ extension of an
Anosov diffemorphism of a compact Riemannian manifold by a random
function $\tau$ and show that the flat traces of the transfer
operator, reduced with respect to frequency in the fibers, converge in
law towards Gaussians, up to an Ehrenfest time that decreases with the
regularity of $\tau$.
\end{abstract}

\newpage
\tableofcontents

\newpage

\section{Introduction}

This paper follows \cite{gossart2020flat} and extends its results to
the case of an Anosov diffeomorphism on a compact Riemannian manifold.

The main object of study are the flat traces of the transfer operator.
This operator acts by pulling back functions and its spectral
properties are linked to the dynamical correlations.  For Anosov
diffeomorphisms, the statistical properties have been studied since
the late 1960's, with help of Markov partitions and symbolic dynamics
\cite{bowen1975ergodic}.  The construction of spaces, in the Anosov
framework, in which the transfer operator is quasicompact was first
achieved later by Blank, Keller and Liverani \cite{blank2002ruelle}.

For Anosov flows or partially hyperbolic diffeomorphisms, the neutral
direction adds a substantial difficulty to the study.  In this
setting, Dolgopyat \cite{dolgopyat1998decay} showed exponential decay
of correlations for the geodesic flow on negatively curved surfaces,
and Liverani \cite{liverani2004contact} generalized this result to all
$\mc C^4$ contact Anosov flows, by constructing anisotropic Banach
spaces in which the generating vector field has a spectral gap and
resolvent.  Tsujii \cite{tsujii2010quasi} extended this method and
showed quasi-compactness of the transfer operator itself, with an
explicit bound on the essential spectral radius, for contact Anosov
flows, in some Hilbert spaces.  Butterley and Liverani
\cite{butterley2007smooth} then constructed Banach spaces to study the
spectrum of general Anosov flows.  Weich and Bonthonneau
\cite{bonthonneau2017ruelle} on their side constructed outside the
scope of compact manifolds appropriate spaces for geodesic flow on
negatively curved manifolds with a finite number of cusps.  Dyatlov
and Guillarmou \cite{dyatlov2016pollicott} did it for open hyperbolic
systems.

A simple example of Anosov flow is the suspension of an Anosov
diffeomorphism, or the suspension semi-flow of an expanding map.
Pollicott \cite{pollicott1985rate} showed exponential decay of
correlations in this setting and Tsujii constructed suitable
spaces for the transfer operator and gave an upper bound on its
essential spectral radius in \cite{tsujii2008decay}.

Here we consider a close model, namely a $\mathbb R$-extension of an
Anosov diffeomorphism on a compact Riemannian manifold, for which
dolgopyat \cite{dolgopyat2002mixing} has shown generic rapid decay of
correlations.  In a series of papers, de Simoi, Liverani, Poquet and
Volk \cite{de2017fast} and de Simoi and Liverani
\cite{de2016statistical} \cite{de2018limit} studied statistical
properties of fast-slow dynamical systems. Their model generalizes
$\ma T$ extensions of circle expanding maps.

We investigate a small random perturbation of the roof function and
show that the flat traces (\ref{trace plate}) of the iterates of the
transfer operator (restricted to a given frequency $\xi$ in the fiber
direction) satisfy a central limit theorem in a semiclassical regime
linking time $n$ and frequency $\xi$ (theorem \ref{theorem main}).  We
obtain convergence towards a Gaussian law up to a constant times the
Ehrenfest time, this constant being a decreasing function of the
regularity of the random function.  The principle is the same as in
\cite{gossart2020flat}: We show pointwise convergence of the
characteristic function, by decomposing for each time $n$ the roof
function as the sum of a random function that decorrelates at a scale
corresponding to the minimal distance between periodic points of
period $n$, and an other function that plays no role if the frequency
is large enough.

Naud \cite{naud2016rate} found in the case of circle extensions of
some analytic Anosov maps of the torus lower bounds on the first
eigenvalue, both in the deterministic and random settings.  He makes
use of the fact that with positive probability, there is a lower bound
on the modulus of the trace (of same order as the scaling $A_n$ from
(\ref{A_n})), and takes advantage of the fact that the operator is
trace class.  This is not the case in our setting and we don't
know whether information on the Ruelle-Pollicott spectrum can be
recovered from our estimation of the flat traces.

\subsection{Model}\label{sec:model}
Let $M$ be a smooth Riemannian manifold of dimension $d$ and
$T:M\longrightarrow M$ be a transitive Anosov diffeomorphism.  This
means that the tangent bundle admits a splitting $TM=E^u\oplus E^s$
such that\vspace{0.5cm}
\begin{center}
\begin{tabular}{cc} $(1)$ & $ dT_x\left(E^i(x)\right)= E^i(f(x)),\ i
\in\{u,s\},$\vspace{0.2cm}\\ $(2)$ & $ \exists 0<\lambda<1, \exists
C>0,\forall n\in\ma N, \left|\begin{array}{lc} \forall v\in E^u,
\left\|dT^{-n}\cdot v\right\|&\leq C \lambda^n\|v\|,\\ \forall v\in
E^s,\ \left\|dT^{n}\cdot v\right\|&\leq
C\lambda^n\|v\| \end{array}\right.$
\end{tabular}
\end{center} \vspace{0.3cm} and that $T$ has a dense orbit.  We will
be interested, given $k\geq0$ and a $\mc C^k$ function $\tau$, in the
skew-product

\begin{equation} \fonction F {M\times\ma R} {M\times\ma
R}{(x,y)}{\left(T(x),y+\tau(x)\right)}.
\end{equation}
\subsection{Ruelle spectrum} To the map $F$ can be associated a
transfer operator $\mc L_\tau$ acting on $\mc C^k(M\times\ma R)$ by
composition:
\begin{equation} \mc L_\tau v = v\circ F.
\end{equation}
Fourier analysis with respect to $y$ leads to the introduction of the
family of operators on $\mc C^k(M)$ indexed by $\xi\in\ma R$

\begin{equation} \mc L_{\xi,\tau}u=e^{i\xi\tau}u\circ T.
\end{equation}
Indeed, if $v$ is a Fourier mode with respect to $y$, that is
$v(x,y)=u(x)e^{i\xi y}$ for some $u\in\mc C^k(M),\in\ma R$,

\begin{equation} \mc L_\tau v (x,y)=\mc L_{\xi,\tau}u(x) e^{i\xi y}.
\end{equation}
These operators can be extended to distributions by duality.  They
have their essential spectral radius bounded by explicit constants in
appropriate spaces.  The operators are not trace class, but we can
define a generalization of their trace, called flat trace, which has a
connexion with their spectrum.  See Appendix \ref{annexe spectre de
Ruelle} for a brief discussion about this.  The flat trace is the main
object studied in this paper, we express a central limit theorem for a
small random perturbation of a given function $\tau$ in the limit of
large times $n$ and frequencies $\xi$ in Theorem \ref{theorem main}.
Its expression involves periodic points and is given by

\begin{equation}\label{trace plate} \mathrm{Tr}^\flat(\mc
L^n_{\xi,\tau})=
\sum_{x,T^n(x)=x}\frac{e^{i\xi\tau_x^n}}{\left|\det(1-d(T^n)_x)\right|},
\end{equation}
where $\tau^n_x$ denotes the Birkhoff sum
\begin{equation}\label{somme de Birkhoff} \tau^n_x:=
\tau(x)+\tau(T(x))+\cdots+\tau(T^{n-1}(x)).
\end{equation}
\subsection{Eigenfunction Gaussian random fields}
\begin{defi} We will call centered Gaussian field on $M$ a random
distribution of the form
\begin{equation} f = \sum_{j\geq 0}c_j \zeta_j \phi_j,
\end{equation}
where the $c_j\geq0$ grow at most polynomially with
$j$, $\zeta_j$ are i.i.d centered Gaussian random variables of
variance 1 and $(\phi_j)_j$ is a Hilbert basis of eigenfuntions of the
Laplace-Beltrami operator:

\begin{equation} \Delta\phi_j=\lambda_j\phi_j,
\end{equation}
$0=\lambda_0\leq\lambda_1\leq\lambda_2\leq\cdots$.
\end{defi} This sum is in general understood in the sense of
distributions, Proposition \ref{proposition regularite champs
gaussiens} thereafter expresses a link between the growth of
$(c_j)_{j\geq0}$ and the regularity of the field.  We will only be
interested in at least continuous fields in what follows.

\begin{example} If $c_j=1$ for all $j$, the random field $W$ is called
white noise:
\begin{equation}
    \label{bruit blanc} W=\sum_{j\geq0}\zeta_j\phi_j.
  \end{equation}
It is a random distribution, almost surely not in
$L^2(M).$
\end{example}

On $M$, we have a notion of Sobolev spaces:
\begin{defi} Let $s\in\ma R$.  The Sobolev space $H^s(M)$ is defined
by

\begin{equation} H^s(M)=\left(1+\Delta\right)^{-s/2}L^2(M).
\end{equation}

\end{defi}
\begin{proposition}\label{proposition regularite champs gaussiens}
Assume that

\begin{equation} c_j=O(j^{-\alpha})
\end{equation}
for some $\alpha\in\ma R$.  Then, almost surely, the centered Gaussian
field

\begin{equation} f:=\sum_{j\geq 0}c_j\zeta_j\phi_j\in H^s(M)
\end{equation}
for every $s<d(\alpha-\frac12)$.  Thus,
\begin{equation} f\in\mc C^k(M)
\end{equation} for every $k<d(\alpha-1)$ (where $\mc C^k(M)$ is
understood as $(\mc C^{-k}(M))'$ for negative $k$.)
\end{proposition}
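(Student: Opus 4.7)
The plan is to compute the $H^s$-norm of the formal series directly via its eigenfunction expansion, use Weyl's law to turn the condition $c_j = O(j^{-\alpha})$ into a summability condition involving $s$, and then conclude the $\mathcal C^k$ statement by Sobolev embedding.

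First, observe that by definition of $H^s(M) = (1+\Delta)^{-s/2}L^2(M)$, a formal sum $\sum_j a_j\phi_j$ lies in $H^s$ iff $\sum_j (1+\lambda_j)^s |a_j|^2 < \infty$. Applied to the random field, this gives
\begin{equation}
\|f\|_{H^s}^2 = \sum_{j\geq 0}(1+\lambda_j)^s c_j^2 \zeta_j^2.
\end{equation}
Since $\mathbb E[\zeta_j^2] = 1$ and the terms are nonnegative, the monotone convergence theorem yields
\begin{equation}
\mathbb E\|f\|_{H^s}^2 = \sum_{j\geq 0}(1+\lambda_j)^s c_j^2.
\end{equation}
By Weyl's law on a compact $d$-dimensional Riemannian manifold, $\lambda_j \asymp j^{2/d}$, so with $c_j = O(j^{-\alpha})$ the general term is $O(j^{2s/d - 2\alpha})$. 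The series therefore converges as soon as $2s/d - 2\alpha < -1$, i.e.\ $s < d(\alpha - \tfrac12)$.

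Second, I would deduce the almost sure statement. For any such fixed $s$, finiteness of $\mathbb E\|f\|_{H^s}^2$ forces $\|f\|_{H^s} < \infty$ almost surely, hence $f\in H^s(M)$ a.s., and the formal series converges in $H^s$. To upgrade to "a.s.\ for every $s<d(\alpha-\tfrac12)$'' rather than "for each fixed $s$'', pick an increasing sequence $s_n\nearrow d(\alpha-\tfrac12)$ and intersect the corresponding full-measure events; the monotonicity $H^{s'}\subset H^s$ for $s\leq s'$ then gives the claim for all admissible $s$ simultaneously.

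Finally, the $\mathcal C^k$ statement follows from the Sobolev embedding $H^s(M) \hookrightarrow \mathcal C^k(M)$ valid for $s > k + d/2$ on a compact manifold. If $k < d(\alpha-1)$, then $k + d/2 < d(\alpha-\tfrac12)$, so there exists $s$ in the admissible range with $s > k+d/2$, and the previous step yields $f\in\mathcal C^k(M)$ almost surely. The only mildly delicate point is the exchange of the a.s.\ quantifier with the parameter $s$, handled by the countable intersection argument above; everything else reduces to Weyl's law plus Sobolev embedding, which are standard on compact Riemannian manifolds.
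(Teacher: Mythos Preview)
Your proof is correct and follows the same overall skeleton as the paper (Weyl's law plus Sobolev embedding), but the mechanism you use to get the almost sure statement is different. The paper invokes Borel--Cantelli to obtain the pathwise bound $\zeta_j=o(j^{\varepsilon})$ a.s.\ for every $\varepsilon>0$, and then checks directly that $\sum_j|c_j\zeta_j|^2(1+\lambda_j)^s<\infty$ a.s.\ from that bound. You instead compute $\mathbb E\|f\|_{H^s}^2$ and argue that finiteness of the expectation forces finiteness a.s. Your route is shorter and avoids the auxiliary Borel--Cantelli step; the paper's route yields a reusable pathwise growth estimate on the $\zeta_j$. Both arguments need the countable-intersection step to pass from ``for each fixed $s$'' to ``for every $s<d(\alpha-\tfrac12)$'', which you handle explicitly and the paper leaves implicit.
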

\begin{proof} See appendix \ref{annexe champs gaussiens}.
\end{proof}

\subsection{Result} If $x$ is a periodic point of $T$, we write its
primitive period $m_x$.  Let us define the amplitudes $A_n$ by

\begin{equation}\label{A_n}
A_n:=\left(\sum_{T^n(x)=x}\frac{m_x}{\left|\det(1-dT^n_x)\right|^2}\right)^{-\frac12}.
\end{equation}
Let also \begin{equation} \Lambda^{\pm}:=\lim_{n\to\infty}\max_{x\in
M}\|dT_x^{\pm n}\|^{\frac1n}
\end{equation} and

\begin{equation}\label{definition coefficient expansion Lambda}
\Lambda:=\max(\Lambda^{\pm}).
\end{equation}

Let $h_{\mathrm{top}}\underset{(\ref{equation h_top leq log
Lambda})}\leq\frac d2\log\Lambda$ be the topological entropy of the
map $T$ (see Definition 3.1.3 in \cite{katok1997introduction}).

\begin{theorem}\label{theorem main} Let us fix any $\tau_0\in \mc
C^0(M)$ and $\varepsilon>0$.  Let

\begin{equation} \delta\tau=\sum_{j\geq 0}c_j\zeta_j\phi_j
\end{equation}
be a centered Gaussian field with $\zeta_j$ i.i.d.
$\mc N(0,1)$ such that
\begin{equation}\label{condition c_n}
\frac1Cj^{-\alpha}\leq c_j\leq C j^{-\beta}
\end{equation}
for some constants $C>0,\alpha>\beta>1.$ By Proposition
\ref{proposition regularite champs gaussiens}, the condition involving
$\beta$ ensures that $\delta\tau$ is almost surely continuous, and
that we can define the flat trace of $\mathcal L^n_{\xi,\tau}$ for
$\tau:=\tau_0+\varepsilon\delta\tau$.  Then, for any $0<c<1$, we have
the following convergence in law
\begin{equation}\label{convergence trace plate}
A_n\mathrm{Tr}^\flat\left(\mc L^n_{\xi,\tau}\right)\longrightarrow\mc
N_{\ma C}(0,1)
\end{equation}
as $n$ and $\xi$ go to infinity under the relation
\begin{equation}\label{relation n xi} n\leq
c\frac{\log\xi}{h_{\mathrm{top}}+\frac{d}{2}(\alpha-\frac12)\log
\Lambda}.
\end{equation}
 
\end{theorem}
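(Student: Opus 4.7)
Grouping the flat trace (\ref{trace plate}) by primitive periodic orbits, I would write
\begin{equation*}
S_n := A_n\,\mathrm{Tr}^\flat(\mc L^n_{\xi,\tau}) = \sum_{\mathcal O:\,m_{\mathcal O}\mid n} b_{\mathcal O}\, e^{i\theta_{\mathcal O}}\, e^{iX_{\mathcal O}},
\qquad b_{\mathcal O} := \frac{A_n\, m_{\mathcal O}}{\left|\det(1 - dT^n_{\mathcal O})\right|},
\end{equation*}
with deterministic phases $\theta_{\mathcal O}:=\xi\,\tau_0^n(\mathcal O)$ and centered Gaussian fluctuations $X_{\mathcal O}:=\xi\varepsilon\,\delta\tau^n(\mathcal O)$; the definition (\ref{A_n}) of $A_n$ normalizes $\sum_{\mathcal O} b_{\mathcal O}^2 = 1$. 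Since $\mc N_{\ma C}(0,1)$ is characterized by its Wick moments $\mathbb E[|Z|^{2p}]=p!$, the plan is to prove (\ref{convergence trace plate}) by the method of moments, targeting $\mathbb E[|S_n|^{2p}]\to p!$ for every $p\geq 1$.

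\noindent\textbf{Gaussian moment formula.} The vector $(X_{\mathcal O})$ is jointly centered Gaussian with covariance $\Sigma_{\mathcal O,\mathcal O'} = \xi^2\varepsilon^2\sum_j c_j^2\,\phi_j^n(x_{\mathcal O})\phi_j^n(x_{\mathcal O'})$, where $\phi_j^n$ denotes the Birkhoff sum of $\phi_j$. Applying $\mathbb E[e^{iY}]=\exp(-\mathrm{Var}(Y)/2)$ to $Y=\sum_i X_{\mathcal O_i}-\sum_i X_{\mathcal O'_i}$ yields
\begin{equation*}
\mathbb E[|S_n|^{2p}] = \sum_{\mathcal O_\bullet,\mathcal O'_\bullet}\Bigl(\prod_i b_{\mathcal O_i}b_{\mathcal O'_i}\Bigr)\, e^{i\xi\sum_i(\tau_0^n(\mathcal O_i)-\tau_0^n(\mathcal O'_i))}\, \exp\!\left(-\tfrac{\xi^2\varepsilon^2}{2}\sum_j c_j^2\Bigl(\sum_i \phi_j^n(x_{\mathcal O_i})-\sum_i \phi_j^n(x_{\mathcal O'_i})\Bigr)^2\right).
\end{equation*}
Configurations where $(\mathcal O'_\bullet)$ is a permutation of $(\mathcal O_\bullet)$ annihilate both the phase and the exponential argument, contributing $p!\,(\sum_{\mathcal O} b_{\mathcal O}^2)^p = p!$ to leading order (corrections from repeated orbits vanish as $\max_{\mathcal O} b_{\mathcal O}^2\to 0$). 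The problem reduces to showing that all non-matching configurations are negligible.

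\noindent\textbf{Decorrelation via spectral truncation.} To beat the exponential combinatorics of periodic orbits, I would follow the principle of \cite{gossart2020flat} and split $\delta\tau=\delta\tau^{\mathrm{lo}}+\delta\tau^{\mathrm{hi}}$ along the Laplace spectrum at a cutoff $J_n$ tuned so that $\delta\tau^{\mathrm{hi}}$ decorrelates at the typical scale separating period-$n$ orbits on the $d$-manifold. Conditioning on $\delta\tau^{\mathrm{lo}}$ turns it into a deterministic smooth perturbation of $\tau_0$, which only enters the phase $e^{i\theta_{\mathcal O}}$. For the high-frequency part, equidistribution of periodic orbits combined with CLT-type fluctuation bounds for Birkhoff sums of individual $\phi_j$ force the quadratic form in the Gaussian exponent to be $\gtrsim \xi^2\varepsilon^2 n\sum_{j>J_n}c_j^2$ for every non-matching configuration, yielding a damping that must dominate the combinatorial count of such configurations. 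With $c_j\sim j^{-\alpha}$ and the Weyl law $\lambda_j\sim j^{2/d}$, optimizing $J_n$ reproduces precisely the threshold (\ref{relation n xi}).

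\noindent\textbf{Main obstacle.} The technical core is the uniform lower bound on the Gaussian exponent for non-matching configurations: one needs a quantitative equidistribution-plus-CLT statement for Birkhoff sums of Laplace eigenfunctions along periodic orbits, robust to pairs of orbits that shadow each other for part of their length and effective simultaneously across all modes $j>J_n$. In parallel one must verify that the residual oscillatory sum coming from the low-frequency phase $e^{i\xi\varepsilon\delta\tau^{\mathrm{lo}}\,{}^n(\mathcal O)}$ does not destroy the key cancellations---a stationary-phase-type control over the periodic-orbit sum. Calibrating these estimates against the cutoff $J_n$ is exactly what pins the Ehrenfest constant $h_{\mathrm{top}}+\frac d 2(\alpha-\frac12)\log\Lambda$ in (\ref{relation n xi}).
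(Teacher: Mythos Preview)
Your overall architecture---normalize, isolate a scale-$n$ piece of $\delta\tau$ that decorrelates at the separation scale of period-$n$ orbits, condition on the rest, and show the remaining phases behave like independent uniforms---matches the paper's. The divergence is in \emph{how} the decorrelation is obtained, and the mechanism you propose does not work as stated.

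The paper does \emph{not} split the given field along a spectral cutoff $J_n$. It constructs, from fresh independent white noises $W_j$, fields $\delta\tau_j := h_j^{d\alpha+\gamma}\sqrt{\chi(h_j^2\Delta)}\,W_j$ with $h_j=\widetilde\Lambda^{-j/2}$ and a smooth Schwartz symbol $\chi$, checks that $\sum_j\delta\tau_j$ has the same \emph{law} as $\delta\tau$ (Corollary~\ref{corollaire delta tau 0}), and then at time $n$ singles out $\delta\tau_n$. The smooth symbol is the whole point: the covariance kernel of $\delta\tau_n$ is the Schwartz kernel of $h_n^{2(d\alpha+\gamma)}\chi(h_n^2\Delta)$, hence semiclassical with off-diagonal decay $O\bigl((h_n/d(x,y))^N\bigr)$ for every $N$ (Proposition~\ref{prop decroissance noyau}). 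Since distinct period-$n$ points are separated by $\gtrsim \Lambda'^{-n/2}$ (Lemma~\ref{lemme distance points periodiques}) and $h_n=\widetilde\Lambda^{-n/2}$ with $\widetilde\Lambda>\Lambda'>\Lambda$, the covariance matrix of $\bigl((\delta\tau_n)^n_O\bigr)_{O\in\mathrm{Per}(n)}$ is diagonal up to $O(e^{-kn})$ for every $k$. No information about individual eigenfunctions $\phi_j$ is ever invoked.

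Your proposed route through ``equidistribution-plus-CLT for Birkhoff sums of individual $\phi_j$ along periodic orbits'' asks for much more: a \emph{lower} bound on $\bigl|\sum_i\phi_j^n(\mathcal O_i)-\sum_i\phi_j^n(\mathcal O'_i)\bigr|$, uniform over non-matching tuples and over all $j>J_n$. But for fixed $j$ and fixed orbits these Birkhoff sums are deterministic numbers; a CLT heuristic suggests a typical size, not a uniform lower bound, and there is no further averaging available---the $j$-sum is precisely the variance you are trying to bound from below. A sharp spectral cutoff also fails to deliver the rapid kernel decay one needs (this is exactly why the paper uses a Schwartz $\chi$). So the ``main obstacle'' you flag is real, and the paper's construction is what circumvents it rather than confronts it.

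A secondary difference: the paper uses the characteristic function and L\'evy's theorem, not moments. Once near-independence is in hand, the analytic work is a high-dimensional Riemann-sum comparison: partition $\ma R^N$, $N=\#\mathrm{Per}(n)$, into cubes of side $2\pi/\xi$, and control the oscillation of the $N$-variate Gaussian density $g_n$ across each cube (Lemmas~\ref{lemme majoration E(mu,nu)} and~\ref{lemme accroissements finis}). The relation~(\ref{relation n xi}) enters precisely as the condition $\|\sqrt{\Sigma_n^{-1}}\|=o(\xi/N^{1+\delta})$ that makes this error negligible. Your moment computation could in principle be carried out with the paper's $(\delta\tau_n)^n_O$ in place of your $\delta\tau^{\mathrm{hi}}$; but as written, the decorrelation step is a gap.
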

\begin{remark} $\beta$ plays no other role than to make sure that the
flat trace is well defined.
\end{remark}
\begin{remark} If for some reason we want to impose a certain
regularity on the function $\delta\tau$ using Proposition
\ref{proposition regularite champs gaussiens}, we need to take
$\alpha$ large enough, and the larger it is, the more restrictive
condition \ref{relation n xi} imposed on the time $n$ is.
\end{remark}
\begin{remark} In \cite{gossart2020flat}, we obtained instead of
(\ref{relation n xi}) the condition

\begin{equation} n\leq c\frac{\log\xi}{\log l +(k+\frac
12+\frac{\delta}{2})\log M}.
\end{equation}

In this setting, $k+\frac12+\frac\delta2$ was the analog of $\alpha$,
and $\log l$ the topological entropy.  $M$ was analogous to
$\Lambda^{\frac d2}$.  The dimension was $1$ and the exponent
$\frac 12$ comes from the fact that $T$ is invertible, while $E$ was
not and had therefore possibly denser periodic points.  $M=\sup E'$
could have been refined as $\lim_n(\sup(E')^n)^{\frac1n}$ to match the
definition of $\Lambda$.  With this in mind, the bound of
\cite{gossart2020flat} translates to

\begin{equation} n\leq c\frac{\log\xi}{h_{\mathrm{top}}
+\frac d2\alpha\log \Lambda}.
\end{equation}

We obtain here a slightly better bound, with $\alpha-\frac12$ instead
of $\alpha$, due to the fact that our proof directly deals with the
multivariate Gaussian probability density in a space of dimension
approximately $e^{nh_{\mathrm{top}}}$ instead of reducing to the
unidimensional variables.
\end{remark}

\section{Proof}

\begin{remark}\label{remarque abus de notation} Let us first remark
that the convergence in law stated in theorem \ref{theorem main} only
involves the law of $\delta\tau$.  Therefore, we will abusively name
$\delta\tau$ another field that has the same law.
\end{remark}

\subsection{Sketch of proof} Let us choose $\alpha>1 $, $\eta>0$ and
$\delta\tau$ as in the statement of Theorem \ref{theorem main}.  We
will show that condition (\ref{condition c_n}) allows us to construct
a centered Gaussian field with the same law as $\delta\tau$, as a sum
of independent Gaussian fields

\begin{equation} \delta\tau=\delta\tau_0+\sum_{j\geq 1}\delta\tau_j,
\end{equation} so that the covariances $\ma
E[\delta\tau_j(x)\delta\tau_j(y)]$ become very small at a distance
greater than $\Lambda'^{-\frac j2}$, for some $\Lambda'>\Lambda$ that
we will choose small enough (\textit{i.e.} close enough to $\Lambda$),
which is smaller than the minimal distance between two periodic points
of large period $j$, as we know from Lemma \ref{lemme distance points
periodiques}.  Here we have used the abusive notation described in the
preliminary remark \ref{remarque abus de notation}.  Therefore, the
phases appearing in the trace formula (\ref{trace plate}) will behave
as independent random variables on $S^1$, almost uniform when $\xi$ is
large enough, that is, under the condition (\ref{relation n xi}).

\subsection{Construction of the fields $\delta\tau_j$}

Let $\chi\in\mc S(\ma R)$ be a positive Schwartz function such that
$\chi(0)=0$, normalized so that
\begin{equation}\label{eq normalisation chi} \int\chi^2\ \mathrm dx=(2\pi)^d.
\end{equation}
Let us define the family of operators

\begin{equation}
    \label{defi Ph} P_h:=\chi(h^2\Delta).
\end{equation}

\begin{proposition}[\cite{zworski2012semiclassical} Theorem 14.9 p.358
and Theorem 9.6 p.209]\label{prop decroissance noyau} $P_h$ is a
$h$-pseudodifferential operator and its Schwartz kernel $K_h$
satisfies
\begin{equation}
    \label{estimee noyau} \forall N>0,\exists C_N>0,\forall x\neq
y,\left| K_h(x,y)\right|\leq \frac{C_N h^N}{d(x,y)^N}.
\end{equation}
\end{proposition}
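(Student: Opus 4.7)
The plan is to apply semiclassical functional calculus together with the standard integration-by-parts estimate for the kernel of a pseudodifferential operator whose symbol is rapidly decaying in $\xi$.

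First, I would verify that $h^2\Delta$ is a self-adjoint semiclassical pseudodifferential operator of order $2$ with principal symbol $|\xi|_g^2$ on $T^*M$; this is the standard local-coordinate expression of the Laplace-Beltrami operator multiplied by $h^2$. Then, since $\chi\in\mathcal S(\ma R)$, I would apply the Helffer--Sj\"ostrand formula
\[
\chi(h^2\Delta) = -\frac{1}{\pi}\int_{\ma C} \bar\partial\tilde\chi(z)\,(z-h^2\Delta)^{-1}\,dA(z),
\]
where $\tilde\chi$ is an almost-analytic extension of $\chi$ with $|\bar\partial\tilde\chi(z)|=O(|\mathrm{Im}\,z|^N\langle z\rangle^{-N-2})$ for every $N$. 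Building a parametrix for the resolvent and integrating in $z$ yields that $P_h=\chi(h^2\Delta)$ is itself an $h$-pseudodifferential operator whose full symbol $a(x,\xi,h)$ lies in the Schwartz-decay class $S(\langle\xi\rangle^{-\infty})$ uniformly in $h$, which is essentially the content of Theorem 14.9 of Zworski.

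Next, to prove the off-diagonal kernel bound, I would localize in a coordinate chart and write
\[
K_h(x,y) = \frac{1}{(2\pi h)^d}\int_{\ma R^d} e^{i(x-y)\cdot\xi/h}\,a(x,\xi,h)\,d\xi.
\]
For $x\neq y$, I would integrate by parts $N$ times against
\[
L = \frac{h}{i|x-y|^2}(x-y)\cdot\partial_\xi,
\]
which satisfies $L\bigl(e^{i(x-y)\cdot\xi/h}\bigr)=e^{i(x-y)\cdot\xi/h}$. Each integration by parts produces a factor $h/|x-y|$; after $N$ steps the remaining integrand is still Schwartz in $\xi$ (with $x$-dependent, locally bounded constants) since $a$ is, so the integral converges and produces the claimed estimate $|K_h(x,y)|\leq C_N(h/d(x,y))^N$ on charts.

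Finally, I would globalize via a partition of unity $(\varphi_j)$ on $M$: far-off-diagonal matrix elements $\varphi_j P_h\varphi_k$ with disjoint supports give $O(h^\infty)$-smoothing kernels, which are absorbed in the bound because $d(x,y)$ is then bounded below. The main technical obstacle is the first step, namely executing the Helffer--Sj\"ostrand construction on a manifold while preserving Schwartz decay of the full symbol in $\xi$ uniformly in $h$; the kernel estimate proper is then a routine stationary-phase-style integration by parts.
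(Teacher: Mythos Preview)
Your proposal is correct and follows the standard route: Helffer--Sj\"ostrand to show $\chi(h^2\Delta)$ is an $h$-pseudodifferential operator with symbol in $S(\langle\xi\rangle^{-\infty})$, followed by oscillatory-integral integration by parts for the off-diagonal kernel decay. The paper itself gives no proof of this proposition; it simply cites Theorems~14.9 and~9.6 of Zworski's book, and your outline is precisely the content of those two theorems (14.9 is the functional-calculus statement, 9.6 the kernel estimate for $h$-PDOs). So there is nothing to compare---you have supplied the argument that the paper outsources to the reference.
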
 Let $W_j$ be a family of independent white noises
(defined in (\ref{bruit blanc})) independent of $\delta\tau$.  Let
$\gamma>0$ and $\widetilde\Lambda>\Lambda$ to be chosen small enough
later and
\begin{equation}\label{definition h_j}h_j=\widetilde\Lambda^{-\frac
j2}\end{equation}
  
$P_{h_j}$ is a positive selfadjoint operator, so we can define
\begin{equation}
    \label{defi delta tau j}
\delta\tau_j:=h_j^{d\alpha+\gamma}\sqrt{P_{h_j}}W_j.
\end{equation}
In other terms, there exist i.i.d.  random variables $\zeta_{j,k}$ of
law $\mc N(0,1)$ such that

\begin{equation}
    \label{delta tau j} \delta\tau_j=h_j^{d\alpha+\gamma}\sum_{k\geq
1} \sqrt{\chi(h_j^2 \lambda_k)}\zeta_{j,k}\phi_k.
\end{equation}
\begin{lemma} Let $K_j$ be the Schwartz kernel of the operator
$h_j^{2(d\alpha+\gamma)}P_{h_j}$.  Then,
\begin{equation} \ma E[\delta\tau_j(x)\delta\tau_j(y)]=K_j(x,y).
\end{equation}

Moreover, on the diagonal, \cite[Theorem 14.10
p.361]{zworski2012semiclassical}
\begin{equation}
    \label{valeur noyau diagonale} K_j(x,x)\underset{(\ref{eq
normalisation chi})}{=}h_j^{d(2\alpha-1)+2\gamma}(1+O(h_j)).
\end{equation}
\end{lemma}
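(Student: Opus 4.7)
The plan is in two steps: compute the covariance directly from the spectral series (\ref{delta tau j}) and recognise it as the Schwartz kernel of $h_j^{2(d\alpha+\gamma)}P_{h_j}$; then evaluate that kernel on the diagonal using the cited semiclassical asymptotic.

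For the first identity, I would substitute (\ref{delta tau j}) into $\mathbb{E}[\delta\tau_j(x)\delta\tau_j(y)]$ and expand the product as a double sum over the eigen-indices $(k,l)$. Because the $\zeta_{j,k}$ are i.i.d.\ centred Gaussians of unit variance, $\mathbb{E}[\zeta_{j,k}\zeta_{j,l}]=\delta_{kl}$, and the double sum collapses to $h_j^{2(d\alpha+\gamma)}\sum_k\chi(h_j^2\lambda_k)\phi_k(x)\phi_k(y)$. Applied in the orthonormal basis $(\phi_k)$, the Hilbert-space functional calculus identifies this with the Schwartz kernel of $h_j^{2(d\alpha+\gamma)}\chi(h_j^2\Delta)=h_j^{2(d\alpha+\gamma)}P_{h_j}$, which is the first claim. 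The exchange of summation and expectation is harmless: the rapid decay of $\chi$, Weyl's law $\lambda_k\asymp k^{2/d}$, and polynomial growth of $\|\phi_k\|_\infty$ make the series absolutely convergent in $L^2(\Omega)$, while off-diagonal control of the kernel is already encoded in (\ref{estimee noyau}).

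For the diagonal asymptotic, essentially no new work is needed. The operator $P_{h_j}=\chi(h_j^2\Delta)$ is a semiclassical pseudodifferential operator with principal symbol $\chi(|\xi|_{g_x}^2)$, and Theorem 14.10 of \cite{zworski2012semiclassical} gives the leading-order expansion of its Schwartz kernel on the diagonal in the form $(2\pi h_j)^{-d}\int_{T^*_xM}\chi(|\xi|_{g_x}^2)\,d\xi\bigl(1+O(h_j)\bigr)$. Multiplying by the prefactor $h_j^{2(d\alpha+\gamma)}$ produces the announced power $h_j^{d(2\alpha-1)+2\gamma}$, while the normalization (\ref{eq normalisation chi}) of $\chi$ is chosen precisely so that the leading numerical constant reduces to one.

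The only genuine point requiring care is matching the normalization (\ref{eq normalisation chi}) with the symbol integral produced by the pseudodifferential expansion of Zworski; once that bookkeeping is done, both assertions follow directly. Beyond this, there is no conceptual obstacle: the first part is a line of linear algebra on the Gaussian series, and the second is a packaged citation.
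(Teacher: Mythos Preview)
Your proposal is correct and follows essentially the same route as the paper: expand the Gaussian series, use $\mathbb{E}[\zeta_{j,k}\zeta_{j,l}]=\delta_{kl}$ to collapse the double sum to $h_j^{2(d\alpha+\gamma)}\sum_k\chi(h_j^2\lambda_k)\phi_k(x)\phi_k(y)$, identify this with the spectral expansion of the Schwartz kernel of $h_j^{2(d\alpha+\gamma)}\chi(h_j^2\Delta)$, and then invoke Zworski's Theorem~14.10 for the diagonal asymptotic. The paper's own proof is in fact slightly terser than yours (it does not spell out the symbol integral or the convergence justification), so there is nothing to add.
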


\begin{proof} Indeed, on one hand

\begin{equation}
\begin{split} \ma E[\delta\tau_j(x)\delta\tau_j(y)]&=
h_j^{2(d\alpha+\gamma)} \sum_{k,k'\geq0}\sqrt{\chi(h_j^2
\lambda_k)}\sqrt{\chi(h_j^2 \lambda_k')}\ma
E[\zeta_{j,k}\zeta_{j,k'}]\phi_k(x)\phi_{k'}(y)\\
&=h_j^{2(d\alpha+\gamma)}\sum_{k\geq0}\chi(h_j^2
\lambda_k)\phi_k(x)\phi_{k}(y),
\end{split}
\end{equation}
and on the other hand, since
\begin{equation} P_{h_j}\phi_k=\chi(h_j^2\lambda_k)\phi_k,
\end{equation}
we have for any $u,v\in L^2(M)$
\begin{equation}
\begin{split} \left\langle v,P_{h_j}u\right\rangle_{L^2(M)}&=\int \bar
v(x)\sum_{k\geq 0}\left(P_{h_j}\phi_k\right)(x)\int u(y)\phi_k(y)\
\mathrm{d}y\mathrm{d}x\\ &=\int \left(\sum_{k\geq
0}\chi(h_j^2\lambda_k)\phi_k(x)\phi_k(y)\right)\bar v(x)u(y)\ \mathrm
dx\mathrm dy.
    \end{split}
\end{equation}
\end{proof}

Let us choose $\Lambda<\Lambda'<\widetilde\Lambda$ (recall that
$\widetilde\Lambda$ is involved in the definition of $h_j$ in
(\ref{definition h_j})).  As a consequence of Lemma \ref{lemme
distance points periodiques}, Proposition \ref{prop decroissance
noyau}, and the definition of $h_j$ we have the following decay:

\begin{equation}
    \label{k(x,y)}
\begin{split} \forall N>0,\exists C_N>0,\forall x\neq y\in M,\forall
j\text{ large enough,}\\ (T^jx=x \text{ and }
T^jy=y)\implies|K_j(x,y)|&\leq C_N
\frac{h_j^{N+2(d\alpha+\gamma)}}{d(x,y)^N}\\ &\leq C_N
\frac{h_j^{N}}{d(x,y)^N}\\ &\leq
\frac{C_N}{C^N}\left(\frac{\Lambda'}{\widetilde\Lambda}\right)^{\frac{N}2j}
\end{split}\end{equation} for some $C>0$.  Thus,

\begin{equation}
    \label{decroissance de la covariance pour les points periodiques}
\forall k>0,\exists C_k>0,\forall x\neq y\in M, (T^jx=x \text{ and }
T^jy=y)\implies|K_j(x,y)|\leq C_k e^{-k j}.
\end{equation}
\begin{lemma} Let us write as in Theorem \ref{theorem main}

\begin{equation} \delta\tau=\sum_{k\geq0}c_k\zeta_k\phi_k.
\end{equation}

We have
\begin{equation} \sum_{j\geq1}\delta\tau_j = \sum_{k\geq
0}c_{k}'\zeta_k'\phi_k,
\end{equation} where $(\zeta_k')_k$ is a family of i.i.d.  random
variables of law $\mc N(0,1)$ independent of the variables $\zeta_k$,
and

\begin{equation} c_{k}'=O\left(c_k\right).
\end{equation}

\end{lemma}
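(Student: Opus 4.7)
The plan is to exchange the order of summation and read off the coefficients of $\sum_{j\geq 1}\delta\tau_j$ in the eigenbasis $(\phi_k)_k$. Plugging the expansion (\ref{delta tau j}) into $\sum_{j\geq 1}\delta\tau_j$ and using Fubini gives formally
\begin{equation*}
\sum_{j\geq 1}\delta\tau_j = \sum_{k\geq 0}\Bigl(\sum_{j\geq 1}h_j^{d\alpha+\gamma}\sqrt{\chi(h_j^2\lambda_k)}\,\zeta_{j,k}\Bigr)\phi_k.
\end{equation*}
Because the family $(\zeta_{j,k})_{j,k}$ is i.i.d.\ $\mc N(0,1)$, for each fixed $k$ the inner sum is a centered Gaussian of variance
\begin{equation*}
(c'_k)^2 := \sum_{j\geq 1}h_j^{2(d\alpha+\gamma)}\chi(h_j^2\lambda_k),
\end{equation*}
and may therefore be written as $c'_k\zeta'_k$ with $\zeta'_k\sim\mc N(0,1)$. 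The joint independence of the $\zeta_{j,k}$ automatically makes the $\zeta'_k$ mutually independent, and the independence from the $(\zeta_k)_k$ is inherited from the fact that the white noises $W_j$ were chosen independent of $\delta\tau$.

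It remains to establish the bound $c'_k = O(c_k)$, which, thanks to the assumption $c_k\geq \tfrac{1}{C}k^{-\alpha}$, reduces to $(c'_k)^2 = O(k^{-2\alpha})$. Using $h_j=\widetilde\Lambda^{-j/2}$ and the change of index $i=j-m$, where $m:=\log_{\widetilde\Lambda}\lambda_k$, one obtains
\begin{equation*}
(c'_k)^2 = \lambda_k^{-(d\alpha+\gamma)}\sum_{i\geq 1-m}\widetilde\Lambda^{-i(d\alpha+\gamma)}\chi(\widetilde\Lambda^{-i}).
\end{equation*}
Since $\chi\in\mc S(\mathbb R)$ with $\chi(0)=0$, we have $|\chi(u)|=O(u)$ as $u\to 0$ and $|\chi(u)|=O_N(u^{-N})$ as $u\to\infty$ for every $N>0$. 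The tail $i\to +\infty$ is thus summable against the factor $\widetilde\Lambda^{-i(d\alpha+\gamma)}$, and the tail $i\to-\infty$ is summable once we choose $N>d\alpha+\gamma$. The sum over $i$ is therefore bounded by a constant uniform in $k$, giving $(c'_k)^2 = O(\lambda_k^{-(d\alpha+\gamma)})$.

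Finally, Weyl's law $\lambda_k\asymp k^{2/d}$ yields $(c'_k)^2 = O(k^{-2\alpha-2\gamma/d}) = O(k^{-2\alpha})$, and hence $c'_k = O(k^{-\alpha}) = O(c_k)$, concluding the proof. The only mildly delicate step is the control of the dyadic-type sum defining $(c'_k)^2$: its small-scale tail is tamed by the Schwartz decay of $\chi$ at infinity, while its large-scale tail is controlled by the vanishing $\chi(0)=0$, which is precisely the reason for imposing this normalization on $\chi$ in the construction of $P_{h_j}$.
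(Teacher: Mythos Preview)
Your argument is correct and follows the same route as the paper: exchange the $j$ and $k$ sums, recognize each coefficient as an independent centered Gaussian with variance $(c'_k)^2=\sum_{j\geq1}h_j^{2(d\alpha+\gamma)}\chi(h_j^2\lambda_k)$, and then bound this by $O(k^{-2\alpha})$ via Weyl's law. The only difference is in the variance estimate: the paper uses the single Schwartz bound $\chi(u)\leq Cu^{-d\alpha}$, giving $(c'_k)^2\leq C\lambda_k^{-d\alpha}\sum_{j\geq1} h_j^{2\gamma}$ (this is precisely where the auxiliary exponent $\gamma>0$ earns its keep), whereas you perform a dyadic shift and split into two tails, obtaining the slightly sharper $(c'_k)^2=O(\lambda_k^{-(d\alpha+\gamma)})$. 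One small slip in your commentary: the normalization $\chi(0)=0$ plays no role in this lemma --- the tail $i\to+\infty$ is already summable from the factor $\widetilde\Lambda^{-i(d\alpha+\gamma)}$ alone, and your closing sentence has the roles of the two tails interchanged.
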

\begin{proof}

\begin{equation} \sum_j\delta\tau_j\underset{(\ref{defi delta tau
j})}=\sum_{k\geq
1}\left(\sum_{j\geq1}h_j^{d\alpha+\gamma}\sqrt{\chi(h_j^2\lambda_k)}\zeta_{j,k}\right)\phi_k.
\end{equation}

Since every variables are independent from each other, the

\begin{equation}
\left(\sum_{j\geq1}h_j^{d\alpha+\gamma}\sqrt{\chi(h_j^2\lambda_k)}\zeta_{j,k}\right)_k
\end{equation}
are independent Gaussian variables of variances

\begin{equation}
{c_k'}^2:=\sum_{j\geq1}h_j^{2d\alpha+2\gamma}\chi(h_j^2\lambda_k).
\end{equation}

Now, since $\chi\in\mc S(\ma R)$,

\begin{equation} \exists C>0,\forall j,k,\chi(h_j^2\lambda_k)\leq\frac
C{h_j^{2d\alpha}\lambda_k^{{d\alpha}}}.
\end{equation}

Also,
\begin{equation} \sum_{j\geq1}h_j^{2\gamma}<\infty
\end{equation}
so the variances satisfy
\begin{equation} {c_k'}^2=\sum_{j\geq1}h_j^{2
d\alpha+2\gamma}\chi(h_j^2\lambda_k)=O(\lambda_k^{- {d\alpha}})
\end{equation}

By Weyl's law \cite[(14.3.21) p.362]{zworski2012semiclassical} there
exists a constant $C$ depending on $M$ such that

\begin{equation}\label{eq:Weyl law} \lambda_k^d\sim C k^2
\end{equation}
so
\begin{equation} {c_k'}^2=O(k^{-2\alpha}).
\end{equation}

\end{proof}

\begin{corollary}\label{corollaire delta tau 0} Consequently, up to
the multiplication of each $\delta\tau_j$ by the same constant,
condition (\ref{condition c_n}) allows us to define the field

\begin{equation}
\delta\tau_0:=\sum_{k\geq1}\sqrt{c_k^2-{c_k'}^2}\zeta_k''\phi_k,
\end{equation} for i.i.d.  random variables $\zeta_k''$ of law $\mc
N(0,1)$ independent of the variables $\zeta_k$ and $\zeta_k'$.
According to Remark \ref{remarque abus de notation}, we will abusively
write
\begin{equation}\label{delta tau=somme delta tau j} \delta\tau =
\sum_{j\geq 0}\delta\tau_j.
\end{equation}
\end{corollary}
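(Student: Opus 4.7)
The plan is to leverage the freedom granted by the clause ``up to the multiplication of each $\delta\tau_j$ by the same constant'' to match the variances $(c_k')^2$ of the sum $\sum_{j\geq 1}\delta\tau_j$ against the target variances $c_k^2$ of $\delta\tau$, and then to fill in the remaining variance with an independent complementary Gaussian field $\delta\tau_0$.

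First I would note that, by the preceding lemma, there exists $D>0$ such that $(c_k')^2\leq D k^{-2\alpha}$ for every $k$, whereas the lower bound in (\ref{condition c_n}) gives $c_k^2 \geq C^{-2}k^{-2\alpha}$. A single global rescaling by any constant $\lambda\in(0,1]$ with $\lambda^2 D\leq C^{-2}$ replaces each $\delta\tau_j$ ($j\geq 1$) by $\lambda\,\delta\tau_j$, whose eigencoordinates then have variances $\lambda^2(c_k')^2\leq c_k^2$ for every $k\geq 1$. This matching of the two $k^{-2\alpha}$ rates is the only real content of the corollary, and the one place where the lower half of (\ref{condition c_n}) — namely, that the exponent of the lower bound on $c_k$ coincides with the exponent $\alpha$ from the decay ${c_k'}^2=O(k^{-2\alpha})$ — is actually used. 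This is where I expect the only mild obstacle to lie.

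Next I would set
\[
\delta\tau_0 := \sum_{k\geq 1}\sqrt{c_k^2-\lambda^2(c_k')^2}\,\zeta_k''\phi_k,
\]
with $(\zeta_k'')_{k\geq1}$ i.i.d. $\mc N(0,1)$, chosen independent of all the previously constructed Gaussians $(\zeta_k)$ and $(\zeta_k')$. Its coefficients are nonnegative by the preceding step and bounded above by $c_k\leq Ck^{-\beta}$, so Proposition \ref{proposition regularite champs gaussiens} guarantees that $\delta\tau_0$ is a well-defined centered Gaussian field of at least the same regularity as $\delta\tau$; in particular it is almost surely continuous since $\beta>1$.

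Finally I would verify that $\delta\tau_0+\sum_{j\geq 1}\delta\tau_j$ has the same law as $\delta\tau$. Expanded in the Hilbert basis $(\phi_k)$, this sum reads $\sum_k\xi_k\phi_k$, where $\xi_k$ is the sum of two independent centered Gaussian variables of variances $c_k^2-\lambda^2(c_k')^2$ and $\lambda^2(c_k')^2$; the variables $(\xi_k)_{k\geq1}$ are mutually independent, because the $\phi_k$-coordinates of $\sum_{j\geq1}\delta\tau_j$ are already mutually independent (as established in the preceding lemma) and the $\zeta_k''$ are independent of each other and of those coordinates. Hence $\xi_k\sim\mc N(0,c_k^2)$ independently in $k$, which is precisely the law of $\delta\tau$, and Remark \ref{remarque abus de notation} then justifies the abusive identification (\ref{delta tau=somme delta tau j}).
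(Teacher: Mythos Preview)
Your proposal is correct and supplies precisely the details the paper leaves implicit: the paper states this corollary without proof, treating it as an immediate consequence of the preceding lemma's conclusion $c_k'=O(c_k)$, and your argument (rescale by $\lambda$ to force $\lambda^2(c_k')^2\leq c_k^2$, define $\delta\tau_0$ with the residual variance, check independence and equality in law) is exactly the intended content of that ``Consequently''.
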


\begin{defi} We will use the following notations for periodic orbits
in this paper: $\mathrm{Per}(n)$ will be the set of periodic orbits of
period $n$, while $\mc P_m$ will be the set of periodic orbits of
primitive period $m$.  This way, we have a disjoint union

\begin{equation} \mathrm{Per}(n)=\coprod_{m|n}\mc P_m.
\end{equation}

\end{defi}

If $O\in\mathrm{Per}(n),$ then the Birkhoff sums
$f^n_x=\sum_{k=0}^{n-1}f(T^kx)$ do not depend on the point $x\in O$
and will be written $f^n_O$.  Similarly, $\det (1-dT^n_O)$ will denote
the Jacobian $\det (1-d(T^n)_x)$ for any $x\in O.$

\begin{proposition}[{\cite[Theorem 18.5.5
p.585]{katok1997introduction} }]
\label{corollaire card per(n)} There exists $C>0$ such that
\begin{equation}\label{card per(n)} \frac1Ce^{nh_{\mathrm{top}}} \leq
\#\mathrm{Per}(n)\leq Ce^{nh_{\mathrm{top}}}.
  \end{equation}

\end{proposition}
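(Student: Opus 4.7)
The plan is to reduce everything to symbolic dynamics via a Markov partition. By the Sinai--Bowen theory, a transitive Anosov diffeomorphism admits a Markov partition that yields a finite-to-one continuous semi-conjugacy $\pi \colon (\Sigma_A, \sigma) \to (M, T)$ with a subshift of finite type; transitivity of $T$ translates into irreducibility of the transition matrix $A$. On the symbolic side one has the exact identity $\#\{w \in \Sigma_A : \sigma^n w = w\} = \mathrm{tr}(A^n)$, and the Perron--Frobenius theorem for nonnegative irreducible matrices, together with the standard SFT computation $h_{\mathrm{top}}(\sigma) = \log \rho(A)$, gives $\mathrm{tr}(A^n) = O(e^{n h_{\mathrm{top}}(\sigma)})$. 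Since a finite-to-one factor map preserves topological entropy, $h_{\mathrm{top}}(\sigma) = h_{\mathrm{top}}(T)$, and the upper bound
\begin{equation*}
\#\mathrm{Per}(n) \leq \deg(\pi) \cdot \mathrm{tr}(A^n) \leq C e^{n h_{\mathrm{top}}}
\end{equation*}
is immediate.

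For the lower bound I would bypass the symbolic model, which is awkward because $\pi$ can identify distinct periodic orbits lying on the boundaries of the Markov rectangles and because $\mathrm{tr}(A^n)$ can vanish along arithmetic subsequences when $A$ has a period $p > 1$. Instead I would combine the definition of topological entropy with the hyperbolicity of $T$: for any $\varepsilon > 0$ smaller than the Anosov closing scale, a maximal $(n, \varepsilon)$-separated set has cardinality at least $e^{n(h_{\mathrm{top}} - o(1))}$, and the Anosov closing lemma guarantees that each such point is shadowed at scale $\varepsilon$ by a genuine $T^n$-periodic point; hyperbolic uniqueness ensures that points separated at scale $\varepsilon$ shadow distinct periodic orbits. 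To sharpen the $o(1)$ into a true constant and to guarantee that the shadowing orbit is of period \emph{exactly} $n$, I would invoke Bowen's specification property, which holds for transitive Anosov diffeomorphisms and allows one to concatenate orbit segments into periodic orbits of prescribed length.

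The main obstacle is the lower bound, and specifically the delicate bookkeeping when transferring a symbolic count back to $M$: one must simultaneously control the loss of orbits along boundaries of the Markov partition and the possible non-primitivity of $A$, both of which can make $\mathrm{tr}(A^n)/\#\mathrm{Per}(n)$ fluctuate with $n$. This is the substance of the proof of Theorem 18.5.5 in \cite{katok1997introduction}, which uses the Smale--Bowen spectral decomposition to isolate topologically mixing pieces on which the count becomes regular in $n$; as the statement is quoted directly from that reference, I would simply invoke it.
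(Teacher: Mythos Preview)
The paper gives no proof of this proposition at all: it is simply quoted as Theorem~18.5.5 of \cite{katok1997introduction}, with no argument supplied. Your concluding sentence---that you would invoke the reference directly---is exactly what the paper does, and the preceding sketch (Markov partition plus Perron--Frobenius for the upper bound, closing/specification for the lower) is a correct outline of the standard argument behind that theorem.
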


\begin{remark} Note that, since we assume the map $T$ to be
transitive, by the Closing Lemma \cite[Theorem 6.4.15
p.269]{katok1997introduction} it has periodic orbits of arbitrary
large period.  Therefore, necessarily, $h_{\mathrm{top}}>0$.
Thus, \begin{equation}
\#\mathrm{Per}(n)\underset{n\to\infty}{\longrightarrow}\infty.
\end{equation}
\end{remark}

\begin{remark} Let us notice that Lemma \ref{lemme distance points
    periodiques} implies
  \begin{equation}\label{equation h_top leq log
Lambda} h_{\mathrm{top}}\leq\frac d2\log\Lambda.
\end{equation}
\end{remark}
\begin{proof} Let indeed $\Lambda'>\Lambda$.  $M$ can be covered by
$O(\Lambda'^{\frac{nd}2})$ balls of radius $\Lambda'^{-\frac n2}$.
The constraint of Lemma \ref{lemme distance points periodiques}
implies that each ball of radius $\Lambda'^{-\frac n2}$ contains a
bounded number of points of $\mathrm{Per}(n)$.  (\ref{card per(n)})
then implies

\begin{equation} e^{nh_{\mathrm{top}}}\leq C\Lambda'^{\frac {nd}2}
\end{equation} for every $\Lambda'>\Lambda.$
\end{proof}

Recall that
\begin{equation}\label{definition delta tau}
\tau=\tau_0+\varepsilon\delta\tau\underset{(\ref{delta tau=somme delta
tau j})}=\tau_0+\varepsilon\sum_{j\geq0}\delta\tau_j.
\end{equation}

\begin{lemma}\label{lemme X^n_O Y^n_O} We can write

\begin{equation} \mathrm{Tr}^\flat(\mc
L^n_{\xi,\tau})=\sum_{m|n}m\sum_{O\in\mc
P_m}\frac{e^{i\varepsilon\xi(X^n_O+Y^n_O)}}{\left|\det\left(1-dT^n_O\right)\right|},
\end{equation}
where \begin{enumerate}
    \item for all $n\in\ma N$, $\left(
X^n_O\right)_{O\in\mathrm{Per}(n)}$ is a Gaussian random vector such
that
\begin{equation}\label{matrice de covariance C_n diag} \exists
C>0,\forall n\in\ma N,\forall O\in\mathrm{Per}(n),\ \frac 1 Cn
h_n^{d(2\alpha-1)+2\gamma}\leq \sigma_O^2:=\ma E[(X^n_O)^2]\leq
Cn^2h_n^{d(2\alpha-1)+2\gamma}
\end{equation} and
\begin{equation}\label{matrice de covariance C_n hors diag}
\forall\alpha>0,\exists C_\alpha,\forall n\in\ma N,\forall O\neq
O'\in\mathrm{Per}(n),\ |\ma E[X^n_OX^n_{O'}]|\leq C_\alpha e^{-\alpha
n}.
\end{equation}
\item For every integer $n$ the random variable
$(Y^n_O)_{O\in\mathrm{Per}(n)}$ is independent of
$\left(X^n_O\right)_{O\in\mathrm{Per}(n)}$.
\end{enumerate}
\end{lemma}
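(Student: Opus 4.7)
My plan is to peel off scale $j=n$ of the decomposition (\ref{delta tau=somme delta tau j}) as the near-independent phase vector $X^n_O$, bundling $\tau_0$ and all other scales into $Y^n_O$. Starting from (\ref{trace plate}), I partition $\{x:T^nx=x\}=\coprod_{m\mid n}\coprod_{O\in\mc P_m}O$; since $\tau^n_x$ and $|\det(1-dT^n_x)|$ are constant on each orbit and $|O|=m$, summing the $m$ equal contributions produces the prefactor $m$ and gives
\begin{equation*}
\mathrm{Tr}^\flat(\mc L^n_{\xi,\tau})=\sum_{m\mid n}m\sum_{O\in\mc P_m}\frac{e^{i\xi\tau^n_O}}{|\det(1-dT^n_O)|}.
\end{equation*}
Using (\ref{definition delta tau}) and (\ref{delta tau=somme delta tau j}), I then set
\begin{equation*}
X^n_O:=(\delta\tau_n)^n_O,\qquad Y^n_O:=\varepsilon^{-1}(\tau_0)^n_O+\sum_{\substack{j\geq0\\ j\neq n}}(\delta\tau_j)^n_O,
\end{equation*}
so that $\xi\tau^n_O=\varepsilon\xi(X^n_O+Y^n_O)$, matching the form required.

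The Gaussianity of $(X^n_O)_{O\in\mathrm{Per}(n)}$ is immediate, since (\ref{defi delta tau j}) writes $\delta\tau_n$ as a deterministic linear function of the white noise $W_n$, and $X^n_O$ is in turn a linear functional of $\delta\tau_n$. Independence of $(X^n_O)_O$ and $(Y^n_O)_O$ is equally direct: $W_n$ is independent of $\tau_0$ (which is deterministic), of $\delta\tau_0$, and of $\{W_j:j\neq n\}$ by construction, while $(Y^n_O)_O$ is a function only of those variables.

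For the covariance, I compute, for $x\in O\in\mc P_m$, $y\in O'\in\mc P_{m'}$ with $m,m'\mid n$,
\begin{equation*}
\ma E[X^n_O X^n_{O'}]=\sum_{k,l=0}^{n-1}K_n(T^kx,T^ly).
\end{equation*}
Every $T^kx$ and $T^ly$ lies in $\mathrm{Per}(n)$, so whenever the two are distinct, (\ref{decroissance de la covariance pour les points periodiques}) bounds the summand by $C_\kappa e^{-\kappa n}$ for arbitrary $\kappa>0$. For $O\neq O'$ all $n^2$ terms are of this form, and a slight loss in $\kappa$ absorbs the factor $n^2$ to produce (\ref{matrice de covariance C_n hors diag}). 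For $O=O'$, coincident pairs $T^kx=T^lx$ occur exactly for $k\equiv l\pmod m$, numbering $n\cdot(n/m)$, and each contributes $K_n(x,x)=h_n^{d(2\alpha-1)+2\gamma}(1+O(h_n))$ by (\ref{valeur noyau diagonale}); the remaining pairs are again exponentially small. The main contribution $(n^2/m)h_n^{d(2\alpha-1)+2\gamma}(1+O(h_n))$ then lies between $nh_n^{d(2\alpha-1)+2\gamma}$ and $n^2 h_n^{d(2\alpha-1)+2\gamma}$, establishing (\ref{matrice de covariance C_n diag}). The only delicate point---the main obstacle---is ensuring that the exponentially small off-diagonal corrections do not cancel the diagonal main term; this reduces to picking the rate $\kappa$ in (\ref{decroissance de la covariance pour les points periodiques}) larger than $\tfrac12[d(2\alpha-1)+2\gamma]\log\widetilde\Lambda$, a freedom which is granted by choosing $\widetilde\Lambda$ close enough to $\Lambda$ in the construction of the $\delta\tau_j$.
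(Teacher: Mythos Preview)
Your proof is correct and follows essentially the same approach as the paper: same orbit packing, same definition of $X^n_O=(\delta\tau_n)^n_O$ and $Y^n_O$, and the same covariance computation via the kernel $K_n$ using (\ref{valeur noyau diagonale}) on the diagonal and (\ref{decroissance de la covariance pour les points periodiques}) off it. One minor remark: the freedom to take the decay rate $\kappa$ in (\ref{decroissance de la covariance pour les points periodiques}) arbitrarily large is already available for \emph{any} fixed $\widetilde\Lambda>\Lambda'>\Lambda$ (because $N$ in (\ref{k(x,y)}) is arbitrary), so your closing caveat about needing $\widetilde\Lambda$ close to $\Lambda$ is not required for this lemma---that constraint enters only later, in matching condition (\ref{relation n xi}).
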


\begin{proof}[Proof of lemma \ref{lemme X^n_O Y^n_O}] Using
~(\ref{trace plate}), and the fact that the Birkhoff sums and
differentials $dT^n_x$ only depend on the orbit, one can pack the
terms
\begin{equation}
    \begin{split} \mathrm{Tr}^\flat(\mc
L^n_{\xi,\tau})&=\sum_{m|n}m\sum_{O\in\mc
P_m}\frac{e^{i\xi\tau^n_O}}{|\det(1-dT^n_O)|}
\end{split}
\end{equation}

Now, we isolate the term $\delta\tau_n$ in (\ref{definition delta
tau}) (where $n$ is the time appearing in the expression
$\mathrm{Tr}^\flat(\mc L^n_{\xi,\tau})$) and set

\begin{equation} X^n_O:=(\delta\tau_n)^n_O
\end{equation}
and
\begin{equation} Y^n_O:=\left(\frac{\tau_0}{\varepsilon}+\sum_{j\neq
n}\tau_j\right)^n_O.
\end{equation}
The independence of the family
$(\zeta_{j,n})_{\substack{n\geq 0 \\j\geq1}}\cup (\zeta''_k)_{k\geq0}$
involved in (\ref{delta tau j}) and Corollary \ref{corollaire delta
tau 0} gives the independence between the families $(X^n_O)_O$ and
$(Y^n_O)_O$.  Then, for an orbit $O\in\mc P_m\subset\mathrm{Per}(n),$

\begin{equation}
\begin{split} X^n_O&=(\delta\tau_n)^n_O\\ &=\frac nm\sum_{x\in
O}\delta\tau_n(x).
         \end{split}
\end{equation}

Thus $X^n_O$ is a Gaussian random variable and

\begin{equation} (X^n_O)^2=\frac{n^2}{m^2}\left(\sum_{x\in
O}\delta\tau_n(x)^2+\sum_{\substack{x,y\in O\\x\neq
y}}\delta\tau_n(x)\delta\tau_n(y)\right).
\end{equation}
So
\begin{equation}
\begin{split} \ma E[(X^n_O)^2]&=\frac{n^2}{m^2}\left(\sum_{x\in
O}K_n(x,x)+\sum_{\substack{x,y\in O\\x\neq y}}K_n(x,y)\right)\\
&\underset{(\ref{valeur noyau diagonale}),(\ref{decroissance de la
covariance pour les points
periodiques})}{=}\frac{n^2}{m}h_n^{d(2\alpha-1)+2\gamma}(1+O(h_n))+O(e^{-\alpha
n})\end{split}
\end{equation}
for every $\alpha>0.$ Similarly
\begin{equation}
\begin{split} \ma E[X^n_OX^n_{O'}]&=\ma E\left[\sum_{x\in
O}\delta\tau_n(x)\sum_{y\in O'}\delta\tau_n(y)\right]\\ &=\sum_{x\in
O,y\in O'}K_n(x,y)\\ &=O(e^{-\alpha n})
\end{split}
\end{equation}
for every $\alpha>0.$

 This gives the expressions (\ref{matrice de covariance C_n diag}) and
(\ref{matrice de covariance C_n hors diag}).
\end{proof}

By Levy's theorem, in order to get the convergence in law of
$A_n\mathrm{Tr}^\flat(\mc L^n_{\xi,\tau})$ towards $\mc N_{\ma
C}(0,1)$, it is sufficient to show that the characteristic function

\begin{equation} \ma
E\left[e^{i\left\langle(\mu,\nu),A_n\mathrm{Tr}^\flat(\mc
L^n_{\xi,\tau})\right\rangle_{\ma R^2}}\right]
\end{equation}
converges pointwise towards
$e^{-\frac{\mu^2+\nu^2}{4}}$.

\begin{proposition}\label{prop emunu} The characteristic function of
the rescaled flat traces

\begin{equation} E(\mu,\nu):=\ma E \left[\exp\left(
{i\left(\mu\mathrm{Re}\left(A_n\mathrm{Tr}^\flat(\mc
L^n_{\xi,\tau})\right)+\nu\mathrm{Im}\left(A_n\mathrm{Tr}^\flat(\mc
L^n_{\xi,\tau})\right)\right)} \right)\right]
\end{equation}
satisfies

\begin{equation} E(\mu,\nu)\sim\prod_{m|n}\prod_{O\in\mc P_m}\int
\exp\left( {i\frac{mA_n}{\abs{\det(1-dT^n_O)}}(\mu\cos x+\nu\sin x)}
\right)\mathrm dx
\end{equation}
under condition (\ref{relation n xi}).
\end{proposition}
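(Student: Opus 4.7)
The plan is to reduce $E(\mu,\nu)$ to a product over periodic orbits by Fourier-expanding each factor in its phase and exploiting the near-diagonal Gaussian structure of the vector $(X^n_O)_O$. Writing $a_O:=mA_n/|\det(1-dT^n_O)|$ and $\theta_O:=\varepsilon\xi(X^n_O+Y^n_O)$, Lemma~\ref{lemme X^n_O Y^n_O} recasts
\begin{equation*}
E(\mu,\nu)=\ma E\Bigl[\prod_{O\in\mathrm{Per}(n)}\exp\!\bigl(ia_O(\mu\cos\theta_O+\nu\sin\theta_O)\bigr)\Bigr].
\end{equation*}
I would expand each $2\pi$-periodic factor as a Fourier series $\sum_{k\in\ma Z}\widehat f_O(k)\,e^{ik\theta_O}$; the zeroth coefficient $\widehat f_O(0)=\int_{S^1}\exp(ia_O(\mu\cos x+\nu\sin x))\,dx$ is precisely the factor appearing on the right-hand side of the proposition. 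Expanding the product, using independence of the $X$- and $Y$-families, and inserting the Gaussian characteristic function $\ma E_X[e^{i\varepsilon\xi\langle\mathbf k,X\rangle}]=\exp(-\tfrac12(\varepsilon\xi)^2\mathbf k^{\mathrm T}C_n\mathbf k)$ yields
\begin{equation*}
E(\mu,\nu)=\sum_{\mathbf k\in\ma Z^{\mathrm{Per}(n)}}\Bigl(\prod_{O}\widehat f_O(k_O)\Bigr)\,\ma E_Y\!\bigl[e^{i\varepsilon\xi\langle\mathbf k,Y\rangle}\bigr]\exp\!\Bigl(-\tfrac{(\varepsilon\xi)^2}{2}\mathbf k^{\mathrm T}C_n\mathbf k\Bigr).
\end{equation*}
The $\mathbf k=0$ term is exactly the claimed product, so the task reduces to showing that the $\mathbf k\neq 0$ tail is $o(1)$ under (\ref{relation n xi}).

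To estimate the tail, use $|\widehat f_O(k)|\le 1$ and $|\ma E_Y[\cdots]|\le 1$. By (\ref{matrice de covariance C_n hors diag}) with a decay exponent $\alpha'>h_{\mathrm{top}}$, a Gershgorin-type bound on $C_n$ (whose off-diagonal row-sums are $O(e^{-(\alpha'-h_{\mathrm{top}})n})$) absorbs the off-diagonal part at the cost of a factor $\tfrac12$:
\begin{equation*}
\mathbf k^{\mathrm T}C_n\mathbf k\ge\tfrac12\sum_{O}\sigma_O^2\,k_O^2\ge\tfrac{n}{2C}\,\widetilde\Lambda^{-n(d(\alpha-\frac12)+\gamma)}|\mathbf k|^2.
\end{equation*}
A standard telescoping estimate for $|\prod_O(a_O+b_O)-\prod_O a_O|$ with $a_O=\widehat f_O(0)$ and $b_O=O(\exp(-(\varepsilon\xi)^2\sigma_O^2/4))$ then bounds the tail by $C\,\#\mathrm{Per}(n)\exp(-(\varepsilon\xi)^2\sigma_{\min}^2/4)$, so it suffices to have $nh_{\mathrm{top}}-(\varepsilon\xi)^2\sigma_{\min}^2/4\to-\infty$. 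Inserting $\log\xi\ge\tfrac{n}{c}(h_{\mathrm{top}}+\tfrac{d}{2}(\alpha-\tfrac12)\log\Lambda)$ from (\ref{relation n xi}) together with the lower bound above on $\sigma_{\min}^2$, the exponent of $(\varepsilon\xi)^2\sigma_{\min}^2$ is at least
\begin{equation*}
n\Bigl[\tfrac{2h_{\mathrm{top}}}{c}+d(\alpha-\tfrac12)\Bigl(\tfrac{\log\Lambda}{c}-\log\widetilde\Lambda\Bigr)-\gamma\log\widetilde\Lambda\Bigr]+O(\log n),
\end{equation*}
and this bracket is strictly positive provided $\widetilde\Lambda$ is chosen close enough to $\Lambda$ from above and $\gamma$ small enough, thanks to $c<1$ and $h_{\mathrm{top}}>0$; hence $(\varepsilon\xi)^2\sigma_{\min}^2$ grows super-exponentially in $n$ and dominates $nh_{\mathrm{top}}$.

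The main obstacle is exactly this quantitative balance. The number of periodic orbits grows like $e^{nh_{\mathrm{top}}}$, while each Gaussian phase decorrelates only at scale $\varepsilon\xi\sigma_{\min}$, which by Lemma~\ref{lemme X^n_O Y^n_O} contains the potentially dangerous factor $\widetilde\Lambda^{-n(d(\alpha-\frac12)+\gamma)/2}$. Closing the estimate requires simultaneously (i) choosing $\widetilde\Lambda>\Lambda$ close enough to $\Lambda$ and $\gamma>0$ small enough, (ii) picking the off-diagonal decay exponent $\alpha'$ in (\ref{matrice de covariance C_n hors diag}) larger than $h_{\mathrm{top}}$ so Gershgorin applies, and (iii) exploiting the gap $c<1$ in (\ref{relation n xi}), which produces the crucial positive factor $\tfrac{1}{c}-1$ in front of $d(\alpha-\tfrac12)\log\Lambda$. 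Once these choices are made, the remaining ingredients (Bessel-type bounds on $\widehat f_O(k)$ if finer control is needed, and the subsequent passage from the characteristic function to weak convergence via Lévy's theorem) are routine.
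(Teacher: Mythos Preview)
Your approach is correct and genuinely different from the paper's. The paper works in real space: it writes $E(\mu,\nu)$ as an integral of $\bigl(\prod_O f_O(\xi(x_O+y_O))\bigr)g_n(x)$ against the Gaussian density $g_n$ of $(X^n_O)_O$, tiles $\ma R^{\mathrm{Per}(n)}$ by hypercubes $H_k$ of side $2\pi/\xi$ (one period of each $f_O$), and compares the integral to a Riemann sum. The error is controlled by $\frac{\sqrt N}{\xi}\sum_k(\frac{2\pi}{\xi})^N\sup_{H_k}\|\nabla g_n\|$, whose vanishing under~(\ref{relation n xi}) is then established through a rather delicate high-dimensional computation: packing the hypercubes into ellipsoidal annuli $C_j$, Stirling-type asymptotics for $\mathrm{Vol}(B^N(0,1))$, and a careful analysis of a one-parameter family $f_N$ of bump functions. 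Your Fourier-series/characteristic-function route sidesteps all of this: after the Gershgorin reduction of $C_n$ to its diagonal, the tail $\sum_{\mathbf k\neq 0}$ factors as $\prod_O\Theta_O-1$ with $\Theta_O-1=O(e^{-(\varepsilon\xi)^2\sigma_O^2/4})$, and one only needs the scalar inequality $Ne^{-(\varepsilon\xi)^2\sigma_{\min}^2/4}\to 0$, which follows directly from~(\ref{relation n xi}). This is considerably shorter and more transparent; the paper's hypercube argument, on the other hand, makes the geometric picture (fast-oscillating phases against a slowly varying Gaussian envelope) more explicit.

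Two small points of presentation. First, your ``telescoping'' step is phrased as $\prod_O(a_O+b_O)-\prod_O a_O$ with $a_O=\widehat f_O(0)$; in fact $E(\mu,\nu)$ does not factor (the $Y$-expectation and the off-diagonal of $C_n$ couple the orbits), so what you actually bound is $|E-\prod_O\widehat f_O(0)|\le\prod_O(1+b_O')-1$ with $b_O'=2\sum_{k\ge 1}e^{-(\varepsilon\xi)^2\sigma_O^2 k^2/4}$; the conclusion is the same. Second, to pass from $|E-\prod|=o(1)$ to $E\sim\prod$ you implicitly use that $\prod_O\widehat f_O(0)$ is bounded away from $0$; this is fine since by the computation following the proposition it converges to $e^{-(\mu^2+\nu^2)/4}$. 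Finally, ``super-exponentially'' should read ``exponentially''.
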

\begin{proof} Let $n\in\ma N$ and $m|n$.  Let us write
  \begin{equation}
    N:=\#\mathrm{Per}(n)\underset{(\ref{card
per(n)})}{\asymp}e^{nh_{\mathrm{top}}}\underset{(\ref{equation h_top
leq log Lambda})}=O\left(\Lambda^{\frac {n d}2}\right).
\end{equation}
Let us write for $O\in\mc P_m$ and $t\in\ma R$
\begin{equation} f_O(t):=\exp\left(
{i\frac{mA_n}{\abs{\det(1-dT^n_O)}}(\mu\cos(t)+\nu\sin(t))} \right)
\end{equation}
and for $x\in\ma R^N$
\begin{equation} g_n(x):=\frac{e^{-\frac12\left\langle
x,\Sigma_n^{-1}x\right\rangle}}{\sqrt{(2\pi)^N\det(\Sigma_n)}},
\end{equation}
where $\Sigma_n$ is the covariance matrix of
$(X^n_O)_{O\in\mathrm{Per}(n)}$: From Lemma \ref{lemme X^n_O Y^n_O},

\begin{equation}\label{eq sigma n}
\Sigma_n=\mathrm{Diag}(\sigma_O^2)_{O\in\mathrm{Per}(n)}+R_n,
\end{equation}
$R_n$ being a matrix with entries uniformly
$O(e^{-kn})$ for every $k>0$.  The proposition will quickly lead to
the pointwise convergence of the characteristic function, and its
proof is a consequence of the following technical lemmas:

\begin{lemma}\label{lemme majoration E(mu,nu)} For $k\in\ma Z^{N}$,
let
\begin{equation} H_k:=\prod_{O\in\mathrm
{Per}(n)}\left[\frac{2\pi}{\xi}k_O,\frac{2\pi}{\xi}(k_O+1)\right].
\end{equation}
\begin{equation}
\left|\frac{E(\mu,\nu)}{\prod_{O\in\mathrm{Per}(n)}\int_0^{2\pi}f_O(t)\mathrm
dt}-1\right|\leq4\pi\frac{\sqrt N}{\xi}\sum_{k\in\ma
Z^N}\left(\frac{2\pi}{\xi}\right)^N\sup_{H_k}\|\nabla g_n\|
\end{equation}

\end{lemma}

\begin{lemma}\label{lemme accroissements finis} $\frac{\sqrt
N}{\xi}\sum\limits_{k\in\ma
Z^N}\left(\frac{2\pi}{\xi}\right)^N\sup\limits_{H_k}\|\nabla
g_n\|\longrightarrow0$ under condition (\ref{relation n xi}).
\end{lemma}
\begin{proof}[Proof of Lemma \ref{lemme majoration E(mu,nu)}] Using
Lemma \ref{lemme X^n_O Y^n_O}, we see that
\begin{equation} E(\mu,\nu)=\int_{\ma R^{\mathrm{Per}(n)}}
\left(\int_{\ma R^{\mathrm{Per}(n)}} \left(\prod_{O\in\mathrm{Per}(n)}
f_O(\xi(x_O+y_O))\right)g_n(x)\mathrm dx\right)d\ma P_Y(y)
\end{equation}
Let us write for a given $y=(y_O)_{O\in\mathrm{Per}(n)}$
    \begin{equation}\label{E_y} E_y(\mu,\nu)= \int_{\ma
R^{\mathrm{Per}(n)}} \left(\prod_{O\in\mathrm{Per}(n)}
f_O(\xi(x_O+y_O))\right)g_n(x)\mathrm dx
\end{equation}
Since, for fixed $y$ the functions $x_O\mapsto f_O(\xi(x_O+y_O))$ are
fast oscillating periodic functions, of period $\frac{2\pi}{\xi}$,
while the Gaussian factor is almost constant at this scale, we
approximate the integral by splitting the space into hypercubes $H_k$
of side length $\frac{2\pi}{\xi}$.

Each $H_k$ has diameter $2\pi\frac{\sqrt N}{\xi}$.  Let us notice that
$\int g_n = 1$ and that for every $k\in\ma Z^N$ and
$y=(y_O)_{O\in\mathrm{Per}(n)}$,
\begin{equation}\label{periodicite de f_O} \int_{H_k}
\left(\prod_{O\in\mathrm{Per}(n)} f_O(\xi(x_O+y_O))\right)dx=\frac
1{\xi^N}\prod_{O\in\mathrm{Per}(n)}\int_0^{2\pi}f_O(t)\mathrm{d}t.
\end{equation}

For any fixed $y=(y_O)_{O\in\mathrm{Per}(n)}\in\ma
R^{\mathrm{Per}(n)}$, by periodicity,
\begin{equation}
    \label{inegalite triangulaire 1}
    \begin{split}
\left|E_y(\mu,\nu)-\prod_{O\in\mathrm{Per}(n)}\int_0^{2\pi}f_O(t)\mathrm
dt\right|=\left|E_y(\mu,\nu)-\left(\prod_{O\in\mathrm{Per}(n)}\int_0^{2\pi}f_O(t)\mathrm
dt\right)\int g_n\right|\\
\leq\left|E_y(\mu,\nu)-\left(\prod_{O\in\mathrm{Per}(n)}\int_0^{2\pi}f_O(t)\mathrm
dt\right)\left(\frac{2\pi}{\xi}\right)^N\sum_{k\in\ma
Z^{\mathrm{Per}(n)}}g_n\left(2\pi\frac k\xi\right)\right|\\
+\left|\prod_{O\in\mathrm{Per}(n)}\int_0^{2\pi}f_O(t)\mathrm
dt\right|\left|\left(\frac{2\pi}{\xi}\right)^N\sum_{k\in\ma
Z^{\mathrm{Per}(n)}}g_n\left(2\pi\frac k\xi\right)-\int
g_n\right|\end{split}
\end{equation}
Splitting the integral in (\ref{E_y}) into a sum of integrals over the
$H_k$, and using (\ref{periodicite de f_O}) allows us to bound the
first term of the right hand-side of (\ref{inegalite triangulaire 1})
by
\begin{multline} \left|\sum_{k\in\ma
Z^{\mathrm{Per}(n)}}\int_{H_k}\left(\prod_{O\in\mathrm{Per}(n)}f_O(\xi(x_O+y_O))\right)\left(g_n(x)-g_n\left(2\pi\frac
k\xi\right)\right)\mathrm{d}x\right|\\ \leq
\left|\prod_{O\in\mathrm{Per}(n)}\int_0^{2\pi}f_O(t)\mathrm
dt\right|2\pi\frac{\sqrt N}{\xi}\sum_{k\in\ma
Z^N}\frac{\sup_{H_k}\|\nabla g_n\|}{\xi^N}
\end{multline} by mean-value inequality.  Likewise, in the second
term,
\begin{equation}
\begin{split} \left|\left(\frac{2\pi}{\xi}\right)^N\sum_{k\in\ma
Z^{\mathrm{Per}(n)}}g_n\left(2\pi\frac k\xi\right)-\int
g_n\right|&=\left|\sum_{k\in\ma
Z^{\mathrm{Per}(n)}}\int_{H_k}\left(g_n\left(2\pi\frac k\xi\right)-
g_n(x)\right)\mathrm{d}x\right|\\ &\leq 2\pi\frac{\sqrt
N}{\xi}\sum_{k\in\ma
Z^{\mathrm{Per}(n)}}\left(\frac{2\pi}{\xi}\right)^N\sup_{H_k}\|\nabla
g_n\|.
\end{split}
\end{equation}
This ends the proof of Lemma \ref{lemme majoration E(mu,nu)}.
\end{proof}

\begin{proof}[Proof of Lemma \ref{lemme accroissements finis}]
\begin{equation}\label{expression norme gradient g_n} \|\nabla
  g_n(x)\|=\frac{\|{\Sigma_n^{-1}}x\|e^{-\frac12\|\sqrt{\Sigma_n^{-1}}x\|^2}}
  {\sqrt{(2\pi)^N\det(\Sigma_n)}}\leq\|\sqrt{\Sigma_n^{-1}}\|
  \frac{\|\sqrt{\Sigma_n^{-1}}x\|
    e^{-\frac12\|\sqrt{\Sigma_n^{-1}}x\|^2}} {\sqrt{(2\pi)^N\det(\Sigma_n)}}.
\end{equation}
Since $\|\nabla g_n\| $ is a function of
$\|\sqrt{\Sigma_n^{-1}}x\|$, we will pack the terms of the sum
\begin{equation}\label{eq sup norme gradient} \sum\limits_{k\in\ma
Z^N}\left(\frac{2\pi}{\xi}\right)^N\sup\limits_{H_k}\|\nabla g_n\|
\end{equation}
by level sets of $\|\Sigma_n^{-1}\|$.  Let us first
observe that, by (\ref{eq sigma n})
\begin{equation}
\sqrt{\Sigma_n^{-1}}=\mathrm{Diag}\left(\frac1{\sigma_O}\right)_{O\in\mathrm{Per}(n))}+O(e^{-kn})
\end{equation}
for every $k$ so, by Lemma \ref{lemme X^n_O Y^n_O}
\begin{equation}\label{norme sqrt C_n^-1}
    \begin{split} \|\sqrt{\Sigma_n^{-1}}\|&\underset{(\ref{matrice de
covariance C_n diag})}{=}O(h_n^{-d(\alpha-\frac12)-\gamma})\\
&\underset{(\ref{definition h_j})}{=}O(\widetilde\Lambda^{\frac
n2(d(\alpha-\frac12)+\gamma)}).
    \end{split}
\end{equation}

Provided that $\widetilde\Lambda>\Lambda$ and $\gamma>0$ are chosen
small enough with respect to $\frac1c$ in (\ref{relation n xi}),
writing $1+\delta:=\frac 1c$, we have
\begin{equation}
\begin{split} \xi&\hspace{0.45cm}\underset{(\ref{relation n
xi})}{\geq}\hspace{0.45cm}e^{\frac nc h_{\mathrm{top}}}\Lambda^{\frac
{nd}{2c}(\alpha-\frac 12)}\\ &\underset{\phantom{(\ref{relation n
xi}),~(\ref{card per(n)})}}{\geq} e^{(1+\delta)nh_{\mathrm{top}}}
\widetilde\Lambda^{\frac n2(d(\alpha-\frac12)+\gamma)}\\
&\underset{(\ref{norme sqrt C_n^-1}),~(\ref{card
per(n)})}{\geq}CN^{1+\delta}\|\sqrt{\Sigma_n^{-1}}\|.
\end{split}
\end{equation}

So
\begin{equation}
    \label{relation norme C_n^-1 et sqrt N /xi} \exists \delta>0,
\|\sqrt{\Sigma_n^{-1}}\|=o\left(\frac{\xi}{N^{1+\delta}}\right).
\end{equation}
Let us now fix such a $\delta$ and write for $j\geq 0$

\begin{equation} C_j:=\left\{x\in\ma
R^N,\frac{j}{N^{\frac{1+\delta}{2}}}<\|\sqrt{\Sigma_n^{-1}}x\|\leq
\frac{j+1}{N^{\frac{1+\delta}{2}}}\right\}.
\end{equation}
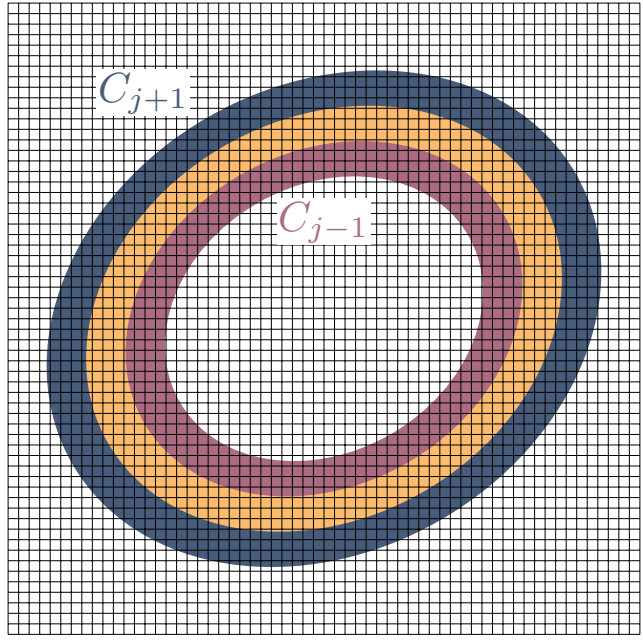
\begin{figure}[h!]\label{fig_entropie_partition}
  \begin{tikzpicture}[scale=0.2]
    \filldraw[thick,cm={cos(60),-sin(60),sin(60),cos(60),(0,0)}, noir,
    scale=1.1] (0,0) ellipse (14cm and 17.5cm);
    \filldraw[thick,cm={cos(60),-sin(60),sin(60),cos(60),(0,0)},
    jaune, scale=1.1] (0,0) ellipse (12cm and 15cm);
    \filldraw[thick,cm={cos(60),-sin(60),sin(60),cos(60),(0,0)},
    rouge, scale=1.1] (0,0) ellipse (10cm and 12.5cm);
    \filldraw[thick,cm={cos(60),-sin(60),sin(60),cos(60),(0,0)},
    White, scale=1.1] (0,0) ellipse (8cm and 10cm);
    \draw[very thin,scale=0.70] (-30,-30) grid (30,30);
    \fill[White] (-3.1,4.9) rectangle (3.1,8);
    \fill[White] (-15.1,13.5) rectangle (-8.9,16.6);
    \node[very thick,rouge, scale
    = 1.6] at (0,6.4) {$C_{j-1}$};
    \node[very thick,noir, scale = 1.6] at
    (-12,15) {$C_{j+1}$};
  \end{tikzpicture}
  \caption{The hypercubes $H_k$, pictured as a grid, have small
    diameter, relatively to the distance between the annuli $C_j,$
    $C_{j+2}$ (or $C_j$ and $C_{j-2}$).  Consequently, the supremum $\sup_{H_k}\|\nabla g_n\|$
    in (\ref{eq sup norme gradient}) can be replaced by a supremum
    over $C_{j-1}\cap C_j\cap C_{j+1}$ if $H_k$ intersercts $C_j$.} 
  \label{fig C_j}
\end{figure}

The diameter of the cubes $H_k$ is then very small compared to
the distance between $C_j$ and $C_{j+2}$ for every $j$ (see
Figure \ref{fig C_j}): Indeed,

\begin{equation} x\in C_{j}\implies \|\sqrt{\Sigma_n^{-1}}x\|\leq
\frac{j+1}{N^{\frac{1+\delta}{2}}}
\end{equation} and
\begin{equation} y\in C_{j+2}\implies \|\sqrt{\Sigma_n^{-1}}y\|\geq
\frac{j+2}{N^{\frac{1+\delta}{2}}}.
\end{equation}

So by triangular inequality
\begin{equation} \frac1{N^{\frac{1+\delta}{2}}}\leq
\|\sqrt{\Sigma_n^{-1}}y\|-\|\sqrt{\Sigma_n^{-1}}x\|\leq
\|\sqrt{\Sigma_n^{-1}}(x-y)\|\leq \|\sqrt{\Sigma_n^{-1}}\|\|x-y\|.
\end{equation}

Thus by (\ref{relation norme C_n^-1 et sqrt N /xi}), for all $j,k$,
\begin{equation}
d(C_{j-2},C_j)\geq\frac{1}{N^{\frac{1+\delta}{2}}\|\sqrt{\Sigma_n^{-1}}x\|}\geq
C\frac{N^{\frac{1+\delta}{2}}}{\xi}\gg 2\pi\frac{\sqrt
N}{\xi}=\mathrm{Diam}(H_k).
\end{equation}

Consequently, we have
\begin{equation}\label{inclusion} \{H_k,H_k\cap
C_j\neq\emptyset\}\subset C_{j-1}\cup C_j\cup C_{j+1}.
\end{equation}

Thus,
\begin{equation}\label{1}
    \begin{split} \sum_{k\in\ma
Z^N}\left(\frac{2\pi}{\xi}\right)^N\sup_{H_k}\|\nabla
g_n\|&\leq\sum_{j\in\ma N}\sum_{k,H_k\cap
C_j\neq\emptyset}\left(\frac{2\pi}{\xi}\right)^N\sup_{H_k}\|\nabla
g_n\|\\ &\leq\sum_{j\in\ma N}\#\{k,H_k\cap
C_j\neq\emptyset\}\left(\frac{2\pi}{\xi}\right)^N{\sup\limits_{C_{j-1}\cup
C_j\cup C_{j+1}}\|\nabla g_n\|}.
    \end{split}
\end{equation}

We deduce from (\ref{inclusion}) that
\begin{equation}\label{2}
    \begin{split} \#\{k,H_k\cap
C_j\neq\emptyset\}\left(\frac{2\pi}{\xi}\right)^N&=\#\{k,H_k\cap
C_j\neq\emptyset\}\cdot\mathrm{Vol}(H_k)\\
&\leq \mathrm{Vol}\left(\bigcup_{l\leq j+1}C_l \right)\\
&=\mathrm{Vol}\{x,\|\sqrt{\Sigma_n^{-1}}x\|\leq
\frac{j+2}{N^{\frac{1+\delta}{2}}}\}\\ &=\sqrt{\det
\Sigma_n}\left(\frac{j+2}{N^{\frac{1+\delta}{2}}}\right)^N\mathrm{Vol}(B(0,1)).
\end{split}
\end{equation}
And we also have
\begin{equation}\label{3} \sup_{C_{j-1}\cup C_j\cup C_{j+1}}\|\nabla
g_n\|\underset{(\ref{relation norme C_n^-1 et sqrt N
/xi}),~(\ref{expression norme gradient g_n})}{\leq}
\frac{1}{N^{\frac12+\delta}}\frac{\xi}{\sqrt
N}\frac{j+2}{N^{\frac{1+\delta}{2}}}\frac{e^{-\frac12\frac{(j-1)^2}{N^{1+\delta}}}}{\sqrt{(2\pi)^N\det(\Sigma_n)}}
\end{equation}
We know moreover (see for instance \cite{ball1997elementary} p.5) that
the unit ball of dimension $N$ has a volume equivalent to
\begin{equation}\label{4} \frac{1}{\sqrt{\pi}}\frac{(2\pi
e)^{N/2}}{N^{\frac{N+1}2}}.
\end{equation}

Finally, putting together (\ref1), (\ref2), (\ref3),
and (\ref4), we have the following upperbound for the sum
\begin{equation}\label{majoration du majorant}
  \begin{split} \frac{\sqrt N}{\xi}\sum_{k\in\ma
      Z^N}&\left(\frac{2\pi}{\xi}\right)^N\sup_{H_k}\|\nabla g_n\|\\
    &\underset{(\ref 1),\ (\ref 2)}{\leq}\frac{\sqrt N}{\xi}
    \mathrm{Vol}B(0,1)\sqrt{\det\Sigma_n}\sum_{j\geq0}
    \left(\frac{j+2}{N^{\frac{1+\delta}{2}}}\right)^N
    \sup_{C_{j-1} \cup C_j\cup
      C_{j+1}}\|\nabla g_n\|\\
    &\hspace{0.43cm}\underset{(\ref 3)}{\leq}\hspace{0.4cm}\frac{1}{N^{\frac12+\delta}} \mathrm{Vol}B(0,1)
    \sum_{j\geq0}\left(\frac{j+2}{N^{\frac{1+\delta}{2}}}\right)^{N+1}
    \frac{e^{-\frac12\frac{(j-1)^2}{N^{1+\delta}}}}{(2\pi)^{N/2}}\\
    &\hspace{0.43cm}\underset{(\ref 4)}{\leq}\hspace{0.4cm}C\frac{e^{N/2}}{N^{(N+1)\frac{1+\delta}2}}\frac{1}
    {N^{\frac12+\delta}}\frac{1}
    {N^{\frac{N+1}2}}\sum_{j\geq0}(j+2)^{N+1}
    e^{-\frac12\frac{(j-1)^2}{N^{1+\delta}}}\\
    &\hspace{0.67cm}=\hspace{0.64cm} C\frac{e^{N/2}}{N^{(N+1)(1+\frac \delta 2)}}\frac{1}
    {N^{\frac12+\delta}}\sum_{j\geq0}(j+2)^{N+1}
    e^{-\frac12\frac{(j-1)^2}{N^{1+\delta}}}.
  \end{split}
\end{equation}
To conclude, we will now show that

\begin{equation} \frac{
e^{N/2}}{N^{(N+1)(1+\frac\delta2)}}\sum_{j\geq0}(j+2)^{N+1}e^{-\frac12(j-1)^2}=
O\left(N^{\frac12+\frac{2\delta}{3}}\right):
\end{equation}
\begin{lemma}  
  Let
  \begin{equation}
    \fonction{f_N}{\ma R}{\ma R}{x}{\frac{
e^{N/2}}{N^{(N+1)(1+\frac\delta2)}}(x+2)^{N+1}e^{-\frac{(x-1)^2}{2N^{1+\delta}}}}.
\end{equation}
It is first increasing then decreasing and admits a maximum $O(1)$ at
\begin{equation}
  x_0\sim N^{1+\frac\delta2}
\end{equation}
and decays at a scale $N^{\frac12+\frac23\delta}$:
If we write for $k\geq-1$,
\begin{equation}
x_k=x_0+kN^{\frac12+\frac23\delta},
\end{equation}
we have
\begin{equation} f_N(x_k)=O(e^{k-\frac12k^2N^{\frac\delta3}})
\end{equation}
as $N$ goes to infinity.
\end{lemma}
\begin{proof}
Differentiating the logarithm of $f_N$ tells us that the function is
increasing, then decreasing, and that the maximum is attained for $x_0$ such
that
\begin{equation} x_0^2+x_0-2=N^{2+\delta}+N^{1+\delta}.
\end{equation}

This shows that
\begin{equation}
  x_0=N^{1+\frac\delta2}+\frac12N^{\frac\delta2}(1+o(1)),
\end{equation}
so 
\begin{equation}
x_0=N^{1+\frac\delta2}+O(N^{\frac\delta2})=N^{1+\frac\delta2}(1+O(N^{-1}))
\end{equation}
and hence
\begin{multline} \max\log f_N=\frac N2-(N+1)(1+\frac\delta2)\log N\\
+(N+1)\log\left(N^{1+\frac\delta2}(1+O(N^{-1}))\right)-\frac12\frac{\left(N^{1+\frac\delta2}+O(N^{\frac\delta2})\right)^2}{N^{1+\delta}}=O(1)
\end{multline}

\begin{equation}
x_k=x_0+kN^{\frac12+\frac23\delta}=N^{1+\frac\delta2}+kN^{\frac12+\frac23\delta}+O(N^{\frac\delta2})=N^{1+\frac\delta2}(1+kN^{-\frac12+\frac\delta6}+O(N^{-1})).
\end{equation}
The remainder does not depend on $k.$
Therefore, for $N\geq1$
\begin{multline} \log f_N(x_k)=\frac N2- (N+1)(1+\frac\delta2)\log N\\
+(N+1)\log\left(N^{1+\frac\delta2}(1+kN^{-\frac12+\frac\delta6}+O(N^{-1}))\right)-\frac12\frac{\left(N^{1+\frac\delta2}+kN^{\frac12+\frac23\delta}+O(N^{\frac\delta2})\right)^2}{N^{1+\delta}}\\
\leq
k(N+1)N^{-\frac12+\frac\delta6}+O(1)-kN^{\frac12+\frac\delta6}-\frac12k^2N^{\frac\delta3}+O(1)\\
=-\frac12k^2N^{\frac\delta3}+k+O(1).
  \end{multline}

  Consequently,
\begin{equation}
  \begin{split}
\sum_{j\geq0}f_N(j)&=\sum_{j<x_{-1}}f_N(j)+\sum_{x_{-1}\leq
j<x_1}f_N(j)+\sum_{k=1}^{+\infty}\sum_{x_k\leq j<x_{k+1}}f_N(j)\\
&\leq
f_N(x_{-1})O(N^{1+\delta})+O(N^{\frac12+\frac23\delta})+O(N^{\frac12+\frac23\delta})\\
&=O(N^{\frac12+\frac23\delta}).
  \end{split}
\end{equation}
\end{proof}
Together with (\ref{majoration du majorant}), this concludes the proof
of Lemma \ref{lemme accroissements finis}, and hence the proof of
Proposition \ref{prop emunu}.
\end{proof}
\end{proof}

In order to conclude the proof, we need the following lemma, whose
proof is postponed to Appendix \ref{annexe pression}.
\begin{lemma}\label{lemme pression}

\begin{equation}
nA_n\sup_{O\in\mathrm{Per}(n)}\frac{1}{|\det(1-dT^n_O)|}\underset{{n\to\infty}}{\longrightarrow}0.
\end{equation}

\end{lemma}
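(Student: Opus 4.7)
The plan is to exploit the thermodynamic formalism for the H\"older potential $\psi=-2\log J^u$ on the transitive Anosov base, where $J^u(x)=|\det dT|_{E^u(x)}|$. The claim is equivalent to
$$n^{2}\,\max_{O\in\mathrm{Per}(n)}|\det(1-dT^n_O)|^{-2}\;\ll\;A_n^{-2}=\sum_{T^nx=x}m_x|\det(1-dT^n_x)|^{-2},$$
so the task is to show that the largest term in the denominator is exponentially negligible compared to the full sum. First I would use the splitting $TM=E^u\oplus E^s$ preserved by $dT$: since on $E^u$ the eigenvalues of $dT^n_O$ blow up while on $E^s$ they vanish, one has the uniform reduction $|\det(1-dT^n_O)|=J^u_n(O)\,(1+O(\lambda^n))$, where $J^u_n(x)=\prod_{k=0}^{n-1}J^u(T^kx)$. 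Since moreover $1\le m_x\le n$, the problem reduces to showing $n\,\max_O J^u_n(O)^{-1}/\bigl(\sum_{T^nx=x}J^u_n(x)^{-2}\bigr)^{1/2}\to 0$.

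The denominator is then estimated by Bowen's counting theorem for transitive Anosov diffeomorphisms applied to the H\"older potential $\psi=-2\log J^u$:
$$\sum_{T^nx=x}e^{S_n\psi(x)}=e^{nP(\psi)+o(n)},\qquad P(\psi)=\sup_{\mu}\Bigl(h_\mu+\textstyle\int\psi\,d\mu\Bigr),$$
with the supremum attained at a unique equilibrium state $\mu^*$, a Gibbs measure of full support; since $T$ is topologically mixing with $h_{\mathrm{top}}>0$, this $\mu^*$ is not a Dirac on a periodic orbit, so $h_{\mu^*}>0$. For the numerator, set $a^*=\inf_\mu\int\log J^u\,d\mu$; periodic approximation of invariant measures (density of orbit measures in the invariant-measure simplex, for transitive Anosov) gives $\frac1n\min_O\log J^u_n(O)\to a^*$, hence $\max_O J^u_n(O)^{-2}=e^{-2na^*+o(n)}$. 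Testing the variational principle against the Dirac on the minimizing orbit yields $P(\psi)\ge -2a^*$, and the inequality is \emph{strict} because that Dirac has entropy $0$ while $\mu^*$ has entropy $h_{\mu^*}>0$, producing the crucial gap $P(\psi)+2a^*>0$.

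Putting these estimates together,
$$nA_n\sup_{O}|\det(1-dT^n_O)|^{-1}=e^{-\frac{n}{2}(P(\psi)+2a^*)+o(n)}\longrightarrow 0,$$
which is the lemma. The main obstacle—and the only genuinely nontrivial point—is the strict inequality $P(\psi)+2a^*>0$: one must rule out that the equilibrium state of $-2\log J^u$ is concentrated on a single periodic orbit. This is exactly Bowen's uniqueness theorem for Gibbs states of H\"older potentials on a topologically mixing hyperbolic set, combined with the positivity of topological entropy already recorded in the remark after Proposition \ref{corollaire card per(n)}. The remaining ingredients (the Anosov reduction $|\det(1-dT^n)|\asymp J^u_n$, Bowen's periodic-orbit equidistribution, and the variational principle) are standard and introduce only subexponential corrections that are absorbed into the $o(n)$.
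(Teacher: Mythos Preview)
Your proof is correct and rests on the same thermodynamic-formalism backbone as the paper's. Both arguments reduce the lemma to the strict inequality $\mathrm{Pr}(-2J_u)>-2J_u^{\min}$ (in your notation $P(\psi)>-2a^*$), after the standard hyperbolic reduction $|\det(1-dT^n_O)|=J^u_n(O)(1+O(\lambda^n))$ and the identification of $A_n^{-2}$ with $e^{n\,\mathrm{Pr}(-2J_u)+o(n)}$; both also use specification to show that $a^*=J_u^{\min}$ is approximated by periodic-orbit averages.

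Where the two differ is in how the strict inequality is obtained. The paper introduces the whole family $F(\beta)=\frac1\beta\mathrm{Pr}(-\beta J_u)$, proves it is strictly decreasing (splitting into the cases where $J_u$ is or is not cohomologous to a constant, the latter via injectivity of $\beta\mapsto\mu_\beta$), and identifies $\lim_{\beta\to\infty}F(\beta)=-J_u^{\min}$, so that $F(2)>F(\infty)$. You instead argue directly at $\beta=2$: equality $P(\psi)=-2a^*$ would force any invariant measure realizing $a^*$ to be an equilibrium state with zero entropy, contradicting the fact that the unique Gibbs equilibrium state for a H\"older potential on a transitive Anosov system with $h_{\mathrm{top}}>0$ has positive entropy. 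One small imprecision: you speak of ``the Dirac on the minimizing orbit'', but the infimum $a^*$ need not be attained by a periodic-orbit measure; it is attained by some invariant $\nu$ obtained as a weak-$*$ limit, and your contradiction should be run with this $\nu$ (the conclusion $h_\nu=0$ and $\nu=\mu^*$ still follows from uniqueness). With that patch your route is marginally more economical, while the paper's monotone family gives a slightly more structural picture of the pressure function.
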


Since
\begin{equation} \int_{\ma R}e^{i(\mu\cos u+\nu\sin u)}\mathrm du =
1-\frac{\mu^2+\nu^2}{4}+o(\mu^2+\nu^2),
\end{equation}
we obtain for a given $(\mu,\nu)\in\ma R^2$

\begin{equation}
\begin{split} \prod_{O\in\mathrm{Per}(n)}\int_{\ma R}f_O(u)\mathrm du
&=\prod_{m|n}\prod_{O\in\mc
P_m}\left(1-\frac{\mu^2+\nu^2}{4}\frac{m^2A_n^2}{|\det(1-dT^n_O)|^2}+o\left(\frac{m^2A_n^2}{|\det(1-dT^n_O)|^2}\right)\right)\\
&=e^{\sum\limits_{m|n}\sum\limits_{O\in\mc
P_m}\log\left(1-\frac{\mu^2+\nu^2}{4}\frac{m^2A_n^2}{|\det(1-dT^n_O)|^2}+o\left(\frac{m^2A_n^2}{|\det(1-dT^n_O)|^2}\right)\right)}\\
&=e^{-\frac{\mu^2+\nu^2}{4}\sum\limits_{m|n}\sum\limits_{O\in\mc
P_m}\frac{m^2A_n^2}{|\det(1-dT^n_O)|^2}+o\left(\sum\limits_{m|n}\sum\limits_{O\in\mc
P_m}\frac{m^2A_n^2}{|\det(1-dT^n_O)|^2}\right)}\\
&\underset{(\ref{A_n})}{=}e^{-\frac{\mu^2+\nu^2}{4}+o(1)}.
\end{split}
\end{equation}

Levy's theorem then implies the convergence in law of $A_n
\mathrm{Tr}^\flat(\mc L^n_{\xi,\tau})$ towards a complex Gaussian law
$\mc N_{\ma C}(0,1)$.

\appendix
\section{Upper bound for the distance between periodic points of a
given period}\label{appendix} Note that for a given period, there is a
finite number of periodic points, whose mutual distance is bounded
below: Then,
\begin{lemma}\label{lemme distance points periodiques}
  \begin{equation} \forall\Lambda'>\Lambda,\exists C>0,\forall n\in\ma
N,\forall x\neq y\in M, (T^nx=x, T^ny=y)\implies d(x,y)\geq \frac
C{\Lambda'^{\frac n2}}.
   \end{equation}
\end{lemma}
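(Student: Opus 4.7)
The plan is to combine the \emph{expansiveness} of the Anosov diffeomorphism $T$ with the uniform bounds on $\|dT^{\pm k}\|$ coming from the definition of $\Lambda$, exploiting the freedom to iterate either forward or backward since $T^n$ fixes both $x$ and $y$.

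First, I would invoke the standard fact that any Anosov diffeomorphism on a compact manifold is expansive: there exists a universal constant $\epsilon_0>0$ (an expansivity constant) such that for any two distinct points $x\neq y\in M$,
\[
\sup_{k\in\mathbb Z} d(T^k x, T^k y)\geq \epsilon_0
\]
(see \cite[Proposition 6.4.6]{katok1997introduction}; it is a direct consequence of the hyperbolic splitting). If $x\neq y$ are both periodic of period $n$, both sequences $(T^k x)_k$ and $(T^k y)_k$ are $n$-periodic, so the supremum above is attained at some $k_0\in\{0,1,\ldots,n-1\}$, and $d(T^{k_0}x,T^{k_0}y)\geq\epsilon_0$.

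Second, for any $\Lambda'>\Lambda$, the definition $\Lambda=\max(\Lambda^+,\Lambda^-)$ supplies a constant $C_0>0$ with $\sup_x\|dT^{\pm k}_x\|\leq C_0 (\Lambda')^k$ for all $k\geq 0$. By the mean-value inequality (valid once $d(x,y)$ is small enough to lie in a single exponential chart, which we may assume since otherwise the claim is trivial), this gives $d(T^{k_0}x,T^{k_0}y)\leq C_0(\Lambda')^{k_0}d(x,y)$. The key trick is that, using $T^nx=x$ and $T^ny=y$, we may equally rewrite $T^{k_0}x = T^{k_0-n}x$ and $T^{k_0}y = T^{k_0-n}y$, and iterate \emph{backward} through $n-k_0$ steps:
\[
d(T^{k_0} x, T^{k_0} y) = d(T^{k_0-n}x, T^{k_0-n}y) \leq C_0 (\Lambda')^{n-k_0} d(x,y).
\]

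Third, combining both estimates with the expansivity lower bound yields
\[
\epsilon_0 \;\leq\; C_0\, \min\!\bigl((\Lambda')^{k_0},(\Lambda')^{n-k_0}\bigr)\, d(x,y) \;\leq\; C_0\,(\Lambda')^{n/2}\, d(x,y),
\]
since $\min(k_0,n-k_0)\leq n/2$. Rearranging gives $d(x,y)\geq (\epsilon_0/C_0)(\Lambda')^{-n/2}$, which is the claim with $C=\epsilon_0/C_0$. The argument is essentially obstacle-free once expansiveness is identified as the input; the exponent $n/2$ (rather than the more obvious $n$) arises precisely from the symmetric use of forward and backward iteration, which allows us to always pick the shorter of the two time-directions to reach $T^{k_0}x$ from $x$.
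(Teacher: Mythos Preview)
Your proof is correct and follows essentially the same route as the paper: both combine expansiveness of the Anosov map with the Lipschitz bound $d(T^kx,T^ky)\leq C_0(\Lambda')^{|k|}d(x,y)$ and use $n$-periodicity to restrict attention to iterates with $|k|\leq n/2$. The paper phrases the final step contrapositively (if $d(x,y)$ is small then all iterates stay $\delta$-close, contradicting expansiveness), whereas you argue directly from a single iterate $k_0$ where the orbits separate; these are logically equivalent presentations of the same argument.
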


Anosov diffeomorphisms are expansive (see
\cite{katok1997introduction}, p.125):

\begin{equation} \exists \delta>0,\forall x,y\in M,\left(\forall
k\in\ma Z,d(T^k(x),T^k(y))\leq \delta\right)\implies x=y.
\end{equation}

Let $\Lambda'>\Lambda$, and let $n_0\in\ma N$ such that
\begin{equation} \forall |n|\geq n_0,\max_{x\in M}\|dT^n_x\|\leq
\Lambda'^{|n|}.
\end{equation}

Let $L = \max_{x\in M}\|dT^{\pm1}_x\|\geq\Lambda.$ For any $x,y\in M$,

\begin{equation} d(T^kx,T^ky)\leq\left\lbrace\begin{array}l
\Lambda'^{|k|}\text{ if }|k|\geq n_0 \\ L^{|k|} \text{ if } |k|<n_0
\end{array}\right.
\end{equation}

Writing $C=\left(\frac{L}{\Lambda'}\right)^{n_0}$, we get
for all $n\in\ma N$, for all periodic points
$x,y$ of period $n$, for all $-n/2\leq k\leq n/2,$

\begin{equation} d(T^kx,T^ky)\leq C\Lambda'^{n/2}d(x,y).
\end{equation}

Therefore, if $d(x,y)\leq\frac{\delta}{C\Lambda'^{n/2}},$ then for all
$-\frac n2\leq k\leq \frac n2,$ (thus for all $k\in\ma Z$)
$d(T^kx,T^ky)\leq \delta$ and $x=y$.

\section{Proof of Lemma \ref{lemme pression}}
\label{annexe pression}
\subsection{Topological pressure}
\begin{proposition}[{\cite[Proposiotion 20.3.3]{katok1997introduction}}]
  Let $\phi:M\longrightarrow \ma R$ be a Hölder function.
  The topological pressure of $\phi$ is given by
  \begin{equation}
    \mathrm{Pr}(\phi)=\lim_{n\to\infty}\frac1n\log\left(\sum_{x=T^n(x)}e^{\phi^n_x}\right).
  \end{equation}
\end{proposition}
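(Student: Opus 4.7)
This is the standard identification, for an expansive map with the closing/specification property, of the topological pressure with the periodic-orbit partition function. The approach is to sandwich the quantity $Z_n(\phi):=\sum_{T^nx=x}e^{\phi^n_x}$ between the Bowen partition functions
\begin{equation}
  P_n(\phi,\epsilon):=\sup_E\sum_{x\in E}e^{\phi^n_x},
\end{equation}
the supremum taken over $(n,\epsilon)$-separated sets $E$, and then invoke the equivalent definition $\mathrm{Pr}(\phi)=\lim_{\epsilon\to 0^+}\limsup_{n\to\infty}\frac1n\log P_n(\phi,\epsilon)$. Throughout, the transitive Anosov hypothesis supplies two essential ingredients: expansivity with some constant $\delta_0>0$, and the Anosov closing lemma (equivalently, the specification property).

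\textbf{Upper bound.} First I would check that
\begin{equation}
  \limsup_{n\to\infty}\frac1n\log Z_n(\phi)\leq\mathrm{Pr}(\phi).
\end{equation}
If $x\neq y$ both satisfy $T^nx=x,\ T^ny=y$ and $d(T^kx,T^ky)<\delta_0$ for all $0\leq k<n$, then by $n$-periodicity this bound extends to every $k\in\ma Z$, contradicting expansivity. Hence $\mathrm{Fix}(T^n)$ is $(n,\delta_0/2)$-separated, so $Z_n(\phi)\leq P_n(\phi,\delta_0/2)$. Taking $\frac1n\log$, letting $n\to\infty$ and then $\delta_0/2\to 0$ (the Bowen limit is monotone in $\epsilon$) gives the claim.

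\textbf{Lower bound.} For the reverse inequality, fix $\epsilon\in(0,\delta_0/4)$ and an $(n,\epsilon)$-separated set $E$ with $\sum_{x\in E}e^{\phi^n_x}\geq\frac12 P_n(\phi,\epsilon)$. The Anosov closing lemma produces, for each $x\in E$, a periodic point $p_x$ of period exactly $n$ (or $n+g$ for some uniform $g=g(\epsilon)$ if one uses the specification formulation) that $\epsilon$-shadows the orbit segment $x,Tx,\dots,T^{n-1}x$; the choice $\epsilon<\delta_0/4$ guarantees $x\mapsto p_x$ is injective. Using Hölder continuity of $\phi$ together with the hyperbolic splitting, the standard Livšic-type bound
\begin{equation}
  |\phi^n_{p_x}-\phi^n_x|\leq C(\epsilon)\|\phi\|_\alpha
\end{equation}
holds with $C(\epsilon)$ independent of $n$, because the distances $d(T^kx,T^kp_x)$ are controlled by a geometric series in $\Lambda^{-\alpha}$ away from the endpoints $k=0,n$. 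Summing and taking $\frac1n\log$ yields $\liminf_n\frac1n\log Z_n(\phi)\geq\limsup_n\frac1n\log P_n(\phi,\epsilon)-o(1)$, and letting $\epsilon\to 0$ concludes. Finally, the existence of the genuine limit (not merely limsup/liminf) follows since both one-sided limits coincide with $\mathrm{Pr}(\phi)$.

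\textbf{Main obstacle.} The delicate step is the lower bound: one must construct distinct periodic orbits shadowing a prescribed separated set and simultaneously control the discrepancy between $\phi^n_{p_x}$ and $\phi^n_x$ uniformly in $n$. Transitivity enters precisely in the specification property, and Hölder regularity of $\phi$ is what makes the shadowing error summable across the orbit.
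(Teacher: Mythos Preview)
The paper does not prove this proposition; it simply cites it as Proposition~20.3.3 of Katok--Hasselblatt and uses it as a black box in Appendix~B. There is therefore no proof in the paper to compare against.

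That said, your argument is the standard one and is essentially what one finds in Katok--Hasselblatt (or in Bowen's original monograph): the upper bound comes from the fact that $\mathrm{Fix}(T^n)$ is $(n,\delta_0)$-separated by expansivity, and the lower bound from the closing/specification property together with the H\"older bounded-distortion estimate $|\phi^n_{p_x}-\phi^n_x|\leq C$. One small point worth tightening in your lower-bound sketch: with the bare Anosov closing lemma you obtain a periodic point of period exactly $n$ shadowing any pseudo-orbit whose start and end are close, but an arbitrary $(n,\epsilon)$-separated point $x$ need not satisfy $d(x,T^n x)$ small. The cleanest fix is precisely the specification route you allude to (getting period $n+g(\epsilon)$), after which the extra $g(\epsilon)$ iterates contribute at most $g(\epsilon)\|\phi\|_\infty$ to the Birkhoff sum and vanish under $\frac1n\log$. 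With that adjustment the argument is complete.
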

\subsection{Variationnal principle}
The topological pressure of a Hölder function is given by a
variationnal principle:
\begin{theorem}[{\cite[Theorem 2.17]{bowen1975ergodic}}]
  Let $\phi:M\longrightarrow M$ be a Hölder function.
  \begin{equation}
    \mathrm{Pr}(\phi)=\sup_\mu(h_\mu(T)+\int_M \phi\ \mathrm d\mu)
  \end{equation}
  where $\mu$ goes through the $T$-invariant probability measures. 
  This supremum is attained at a unique measure \cite[Propostion
  20.3.7]{katok1997introduction} called equilibrium measure of $\phi$.
\end{theorem}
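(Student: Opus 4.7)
The plan is to establish the two halves of the statement separately: first the variational identity, then the uniqueness of the maximizer. For the identity, I would prove the two inequalities in turn.

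For the upper bound $h_\mu(T)+\int\phi\,d\mu\le\mathrm{Pr}(\phi)$ valid for every $T$-invariant Borel probability $\mu$, I would pick a finite measurable partition $\mc P$ of $M$ of small diameter and consider its dynamical refinements $\mc P^n:=\bigvee_{k=0}^{n-1}T^{-k}\mc P$. Using concavity of $-x\log x$ (equivalently, Jensen/Gibbs inequality) one has
\begin{equation}
\sum_{P\in\mc P^n}\mu(P)\Bigl(-\log\mu(P)+\sup_{x\in P}\phi^n_x\Bigr)\le \log\sum_{P\in\mc P^n}e^{\sup_P\phi^n}.
\end{equation}
Dividing by $n$, taking $n\to\infty$, and then taking a supremum over partitions $\mc P$ relates the left-hand side to $h_\mu(T)+\int\phi\,d\mu$, while the right-hand side is bounded by $\mathrm{Pr}(\phi)$ once one observes that the diameter of $\mc P^n$ shrinks geometrically and that a transversal in each atom is $(n,\varepsilon)$-separated.

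For the lower bound, I would construct near-maximizers via $(n,\varepsilon)$-separated sets: let $E_n$ be maximal such, and set $\nu_n:=\frac{1}{Z_n}\sum_{x\in E_n}e^{\phi^n_x}\delta_x$ with $Z_n:=\sum_{x\in E_n}e^{\phi^n_x}$, and symmetrize under the dynamics, $\mu_n:=\frac{1}{n}\sum_{k=0}^{n-1}T^k_\ast\nu_n$. Any weak-$\ast$ subsequential limit $\mu$ is $T$-invariant; a computation with a partition $\mc Q$ whose boundary has $\mu$-measure zero and diameter less than $\varepsilon$ gives $h_\mu(T,\mc Q)+\int\phi\,d\mu\ge\limsup_n\frac{1}{n}\log Z_n$. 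Letting $\varepsilon\to 0$ and recalling that $\frac{1}{n}\log Z_n\to\mathrm{Pr}(\phi)$ (since summing $e^{\phi^n_x}$ over periodic points of period $n$ is comparable to the $(n,\varepsilon)$-separated partition function) yields $\sup_\mu(h_\mu(T)+\int\phi\,d\mu)\ge\mathrm{Pr}(\phi)$.

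For uniqueness, I would invoke the Anosov structure of $T$: by Bowen's construction there is a Markov partition $\mc R$ providing a finite-to-one semi-conjugacy from a topologically mixing subshift of finite type $(\Sigma_A,\sigma)$ onto $M$. The Hölder potential $\phi$ pulls back to a Hölder $\widetilde\phi$ on $\Sigma_A$, and the Ruelle--Perron--Frobenius theorem furnishes a unique positive eigenfunction $h$ and eigenmeasure $\nu$ for the transfer operator $\mc L_{\widetilde\phi}$; the normalized product $\widetilde\mu:=h\,d\nu$ is the unique shift-invariant Gibbs measure, characterized by the two-sided bound $c^{-1}\le\widetilde\mu([x_0\cdots x_{n-1}])\,e^{nP(\widetilde\phi)-\widetilde\phi^n_x}\le c$ on every cylinder. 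One then shows that any equilibrium state for $\widetilde\phi$ must satisfy this Gibbs inequality: convexity of entropy together with the Gibbs bound forces a candidate equilibrium state to coincide with $\widetilde\mu$ on cylinders and hence on the Borel $\sigma$-algebra. Pushing down to $M$ gives a unique equilibrium measure for $(T,\phi)$.

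The main obstacle is this last step of the uniqueness argument: translating the topological Gibbs characterization into a rigidity statement on equilibrium measures. The standard route uses the relative entropy identity $h_\mu(T\,|\,\mc P^\infty)=\int\log\frac{d\mu}{d\widetilde\mu}$ against a Gibbs reference, together with strict concavity of entropy under the Hölder regularity of $\phi$ and topological mixing of $T$; any slackening of these hypotheses (e.g.\ continuous but not Hölder $\phi$) breaks uniqueness, which is why the Hölder assumption is essential here.
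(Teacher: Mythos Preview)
The paper does not supply a proof of this theorem: it is quoted as a known result, with the variational identity attributed to Bowen's monograph and the uniqueness of the equilibrium state to Katok--Hasselblatt. It is used as a black box in the appendix to show that $\beta\mapsto\frac1\beta\mathrm{Pr}(-\beta J_u)$ is strictly decreasing. There is therefore nothing in the paper to compare your proposal against.

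That said, your outline is essentially the classical route one finds in those references: Misiurewicz's argument for the two inequalities (Jensen on dynamical refinements for the upper bound, weak-$\ast$ limits of weighted empirical measures on $(n,\varepsilon)$-separated sets for the lower bound), and Bowen's symbolic approach via a Markov partition and the Ruelle--Perron--Frobenius theorem for uniqueness. One small caveat: in your lower bound you equate the periodic-orbit partition function with the $(n,\varepsilon)$-separated one; in the paper's setup the pressure is \emph{defined} through periodic points, so that comparison (which rests on specification/shadowing) is what actually needs arguing, not the other way round. Your final paragraph on uniqueness is accurate in spirit but rather telegraphic; the step ``any equilibrium state must satisfy the Gibbs inequality'' is where the work lies, and in Bowen's treatment it goes through ergodic decomposition plus the entropy formula rather than a direct relative-entropy identity.
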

Let us write
\begin{equation}
  J_u(x):=\log|\det\mathrm dT_{|E^u(x)}|.
\end{equation}
 $J_u$ is said to be cohomologous to a constant if 
  \begin{equation}
    \exists c\in\ma R,\exists h\in\mc C(M),\ J_u=c+h\circ T-h.
 \end{equation}
 
\begin{lemma}[{\cite[Proposition 20.3.10]{katok1997introduction}}]
  Let $\mu_\beta$ be the equilibrium measure of $-\beta J_u$, $\beta>0$.
  If $J_u$ is not cohomologous to a constant, the map $\beta\in\ma R_+\mapsto \mu_\beta$ is injective.  
\end{lemma}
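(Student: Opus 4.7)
The plan is to argue by contrapositive: assuming the map $\beta\mapsto\mu_\beta$ fails to be injective on $\ma R_+$, I would deduce that $J_u$ is cohomologous to a constant. The whole argument rests on strict convexity of the topological pressure function $P(\beta):=\mathrm{Pr}(-\beta J_u)$.

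First, I would suppose $\mu_{\beta_1}=\mu_{\beta_2}=:\mu$ for some $0<\beta_1<\beta_2$. The variational principle recalled just above gives, for every $T$-invariant probability measure $\nu$ and for $i=1,2$,
\begin{equation}
h_\nu(T)-\beta_i\int J_u\,\mathrm d\nu\ \leq\ h_\mu(T)-\beta_i\int J_u\,\mathrm d\mu.
\end{equation}
Forming the convex combination with weights $t$ and $1-t$, I get, for every $\beta_t:=t\beta_1+(1-t)\beta_2$ with $t\in[0,1]$, that $\mu$ is also the equilibrium state of $-\beta_t J_u$. Hence $\mu_{\beta_t}=\mu$ and $P$ coincides on the whole interval $[\beta_1,\beta_2]$ with the affine function $\beta\mapsto h_\mu(T)-\beta\int J_u\,\mathrm d\mu$.

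Next, I would import the smoothness theory of the pressure in the transitive Anosov–Hölder setting. Through a Markov partition, the situation reduces to a mixing subshift of finite type with a Hölder potential, for which Ruelle's transfer-operator analysis (see \cite{bowen1975ergodic}) yields that $P$ is real-analytic in $\beta$, with second derivative equal to the asymptotic variance
\begin{equation}
P''(\beta)\ =\ \sigma^2_{\mu_\beta}(J_u)\ :=\ \lim_{n\to\infty}\frac1n\int\Bigl(J_u^n-n\int J_u\,\mathrm d\mu_\beta\Bigr)^{2}\mathrm d\mu_\beta.
\end{equation}
The affineness obtained in the previous step then forces $\sigma^2_\mu(J_u)=0$.

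To conclude, I would invoke Livsic's theorem in its variance formulation: for a transitive Anosov diffeomorphism and a Hölder function $\phi$, vanishing of $\sigma^2_\mu(\phi)$ at the equilibrium measure of a Hölder potential is equivalent to $\phi$ being cohomologous to a constant through a Hölder coboundary. Applied to $\phi=J_u$, this contradicts the hypothesis of the lemma, completing the contrapositive. The main obstacle is really the packaging of these two classical ingredients together, namely the identification $P''(\beta)=\sigma^2_{\mu_\beta}(J_u)$ and the rigidity step "$\sigma^2_\mu=0 \Longleftrightarrow$ cohomologous to a constant"; both rest on the symbolic model provided by a Markov partition, consistent with the earlier uses of Anosov thermodynamic formalism in the paper.
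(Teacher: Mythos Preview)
The paper does not actually prove this lemma: it is quoted verbatim from \cite[Proposition 20.3.10]{katok1997introduction} and used as a black box, with the subsequent \texttt{proof} environment belonging to the Corollary, not to the lemma. So there is no ``paper's own proof'' to compare against.

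Your argument is correct and is a standard route to this statement. The convex-combination step is fine and shows that $\mu$ is an equilibrium state for $-\beta_t J_u$ for every $\beta_t\in[\beta_1,\beta_2]$ (uniqueness then gives $\mu_{\beta_t}=\mu$), whence $P$ is affine on that interval. From there, the identification $P''(\beta)=\sigma^2_{\mu_\beta}(J_u)$ and the rigidity ``$\sigma^2_\mu(J_u)=0\Rightarrow J_u$ cohomologous to a constant'' yield the contrapositive. One small remark: you do not actually need real-analyticity of $P$---affineness on a nondegenerate interval already forces $P''$ to vanish there, which is all that is required to get $\sigma^2_\mu(J_u)=0$. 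Your invocation of Markov partitions and the symbolic model is consistent with how the surrounding thermodynamic formalism is sourced in the paper (via \cite{bowen1975ergodic} and \cite{katok1997introduction}).
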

\begin{corollary}
  As a consequence, the function
  \begin{equation}
    \fonction F {\ma R_+^*}{\ma R}\beta{\frac 1\beta\mathrm{Pr}(-\beta J_u)}
  \end{equation}
  is strictly decreasing.
\end{corollary}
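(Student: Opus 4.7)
The plan is to deduce strict monotonicity directly from the variational principle combined with the injectivity statement of the preceding lemma, via a short algebraic manipulation. No derivative of the pressure is needed, and in particular I do not need to invoke Ruelle's formula or real-analyticity of $g$.

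Fix $0<\beta_1<\beta_2$ and write $g(\beta):=\mathrm{Pr}(-\beta J_u)$, so that $F(\beta)=g(\beta)/\beta$. By the variational principle, $\mu_{\beta_1}$ is the \emph{unique} $T$-invariant probability measure achieving $\sup_\mu\bigl(h_\mu(T)-\beta_1\int J_u\,d\mu\bigr)=g(\beta_1)$. The preceding lemma gives $\mu_{\beta_2}\neq \mu_{\beta_1}$, so evaluating this functional at $\mu_{\beta_2}$ yields the strict inequality
\[
  g(\beta_1) \;>\; h_{\mu_{\beta_2}}(T) - \beta_1 \int J_u\,d\mu_{\beta_2}.
\]

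Multiply through by $\beta_2>0$ and subtract the identity $\beta_1 g(\beta_2)=\beta_1 h_{\mu_{\beta_2}}(T)-\beta_1\beta_2\int J_u\,d\mu_{\beta_2}$, which holds because $\mu_{\beta_2}$ realizes the supremum at $\beta_2$. The cross terms involving $\int J_u\,d\mu_{\beta_2}$ cancel and one is left with
\[
  \beta_2\,g(\beta_1) \;-\; \beta_1\,g(\beta_2) \;>\; (\beta_2-\beta_1)\,h_{\mu_{\beta_2}}(T) \;\geq\; 0,
\]
the last bound using only that measure-theoretic entropy is non-negative. Dividing by $\beta_1\beta_2>0$ yields $F(\beta_1)>F(\beta_2)$, which is the desired strict decrease.

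The only delicate point is justifying the strict inequality in the variational principle when tested against an off-equilibrium measure: this requires uniqueness of the equilibrium state (Proposition 20.3.7 of \cite{katok1997introduction}, already cited) together with $\mu_{\beta_2}\neq \mu_{\beta_1}$, the content of the preceding lemma. The hypothesis that $J_u$ is not cohomologous to a constant enters only indirectly, through that lemma; everything else is algebraic.
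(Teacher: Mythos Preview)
Your argument is essentially the same as the paper's: test the variational principle for $-\beta_1 J_u$ at the off-equilibrium measure $\mu_{\beta_2}$, use uniqueness of the equilibrium state together with $\mu_{\beta_1}\neq\mu_{\beta_2}$ to get a strict inequality, and then use non-negativity of entropy to pass from $h_{\mu_{\beta_2}}/\beta_1$ to $h_{\mu_{\beta_2}}/\beta_2$. The algebra you chose (multiply by $\beta_2$, subtract, divide by $\beta_1\beta_2$) is just a repackaging of the paper's chain of inequalities
\[
F(\beta_1)=-\!\int J_u\,d\mu_{\beta_1}+\frac{h(\mu_{\beta_1})}{\beta_1}>-\!\int J_u\,d\mu_{\beta_2}+\frac{h(\mu_{\beta_2})}{\beta_1}\geq -\!\int J_u\,d\mu_{\beta_2}+\frac{h(\mu_{\beta_2})}{\beta_2}=F(\beta_2).
\]

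There is, however, one genuine omission. The preceding lemma has the hypothesis that $J_u$ is \emph{not} cohomologous to a constant; without it you cannot conclude $\mu_{\beta_1}\neq\mu_{\beta_2}$, and your strict inequality collapses. The corollary is stated (and used later in the appendix) unconditionally, so you must treat the cohomologous case separately. The paper does this first: if $J_u=c+h\circ T-h$ then $\mathrm{Pr}(-\beta J_u)=h_{\mathrm{top}}-c\beta$, whence $F(\beta)=h_{\mathrm{top}}/\beta-c$ is strictly decreasing because $h_{\mathrm{top}}>0$. Add this short paragraph and your proof is complete.
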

\begin{proof}
  If $J_u$ is cohomologous to a constant $c$, then
  by Proposition \ref{corollaire card per(n)},
  \begin{equation}
    \sum_{T^n(x)=x}e^{-\beta (J_u)^n_x}\asymp e^{n(h_{\mathrm{top}}-c\beta)}.
  \end{equation}
  Hence,
  \begin{equation}
    \mathrm{Pr}(-\beta J_u)=h_{\mathrm{top}}-c\beta,
  \end{equation}
  with $h_{\mathrm{top}}>0$, which gives the wanted result.  If $J_u$ is
  not cohomologous to a constant, then the previous Lemma applies.
  Let $\beta'>\beta>0$.
  \begin{equation}
    \int -\beta J_u\ \mathrm d\mu_\beta=\mathrm{Pr}(-\beta J_u)>\int -\beta' J_u\ \mathrm d\mu_{\beta'}
  \end{equation}
  
  Consequently,
  \begin{equation}
    F(\beta)=\int -J_u\ \mathrm d\beta  +\frac{h(\mu_\beta)}{\beta}>\int
    -J_u\ \mathrm d\mu_{\beta'}+\frac{h(\mu_{\beta'})}{\beta}\geq\int
    -J_u\ \mathrm d\mu_{\beta'}+\frac{h(\mu_{\beta'})}{\beta'}=F(\beta').
 \end{equation}
\end{proof}
$F$ is moreover bounded below, and admits consequently a limit as
$\beta$ goes to infinity.
\subsection{End of proof of Lemma \ref{lemme pression}}
The quantity $\inf_{x\in M}(J_u)^n_x$ is subadditive.  We can
consequently define
\begin{equation}
  J_u^{\min}:=\lim_{n\to\infty}\inf_{x\in M}\frac1n(J_u)^n_x.
\end{equation}
\begin{lemma}
  The infimum can be taken over periodic points of period $n$:
  \begin{equation}
    J_u^{\min} =\lim_{n\to\infty}\inf_{T^n(x)=x}\frac1n(J_u)^n_x.
  \end{equation}
\end{lemma}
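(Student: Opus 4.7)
The plan is to sandwich the right-hand side between $J_u^{\min}$ from both sides. The lower bound $J_u^{\min} \leq \liminf_n \frac{1}{n}\inf_{T^n(x)=x}(J_u)^n_x$ is immediate from the containment $\{x : T^n x = x\} \subset M$, so the work is to prove the matching upper bound $\limsup_n \frac{1}{n}\inf_{T^n(x)=x}(J_u)^n_x \leq J_u^{\min}$.

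For this upper bound, I would invoke the Anosov shadowing/specification property available because $T$ is a transitive Anosov diffeomorphism (Bowen): for every $\delta > 0$, there exists an integer $N = N(\delta)$ such that for every $x \in M$ and every $n \geq 1$, there is a periodic point $y$ with $T^{n+N}y = y$ and
\begin{equation}
d(T^k y, T^k x) \leq \delta \quad \text{for } 0 \leq k < n.
\end{equation}
Since $T$ is $C^1$ Anosov, the unstable distribution $E^u$ is Hölder continuous, and therefore $J_u = \log|\det dT_{|E^u}|$ is Hölder continuous with some exponent $\vartheta > 0$ and constant $C_H$.

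Given $\varepsilon > 0$, fix $\delta > 0$ small enough that $C_H\delta^{\vartheta} < \varepsilon$. Using the subadditivity that defines $J_u^{\min}$, choose for each large $n$ a point $x_n \in M$ with $\tfrac{1}{n}(J_u)^n_{x_n} < J_u^{\min} + \varepsilon$, and let $y_n$ be the shadowing periodic point of period $n+N$ furnished above. Hölder continuity of $J_u$ combined with the shadowing bound yields
\begin{equation}
\bigl|(J_u)^n_{y_n} - (J_u)^n_{x_n}\bigr| \leq n C_H \delta^{\vartheta} < n\varepsilon,
\end{equation}
while the remaining $N$ terms contribute at most $N\|J_u\|_\infty$. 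Hence
\begin{equation}
\frac{1}{n+N}\inf_{T^{n+N}z=z}(J_u)^{n+N}_z \leq \frac{n(J_u^{\min}+2\varepsilon) + N\|J_u\|_\infty}{n+N} \xrightarrow[n\to\infty]{} J_u^{\min}+2\varepsilon.
\end{equation}
Because $N$ is fixed once $\delta$ is chosen, the values $n+N$ exhaust all sufficiently large integers as $n$ varies, so the $\limsup$ over all periods is at most $J_u^{\min}+2\varepsilon$. Letting $\varepsilon \to 0$ closes the sandwich and proves both the existence of the limit and its identification with $J_u^{\min}$.

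The main obstacle is the control of the period: the shadowing periodic point naturally lies in $\mathrm{Per}(n+N)$ rather than $\mathrm{Per}(n)$, so the bookkeeping must ensure that varying $n$ still sweeps out every large period. Specification (as opposed to the bare closing lemma, which would also require $d(T^n x_n, x_n)$ to be small a priori) handles this cleanly because it imposes no return-closeness condition on $x_n$, and the fixed gap $N(\delta)$ becomes negligible after division by $n$.
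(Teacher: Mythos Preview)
Your argument is correct and follows essentially the same route as the paper: both use the specification property to shadow a near-minimizing orbit segment by a periodic orbit of period $n+N$, control the Birkhoff-sum discrepancy via (H\"older) continuity of $J_u$, and absorb the fixed gap $N$ as a lower-order term. Your write-up is in fact slightly more careful than the paper's in justifying the H\"older regularity of $J_u$ and in noting that the periods $n+N$ sweep out all large integers.
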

\begin{proof}
  For $n\in\ma N$, let $x_n\in M$ be the point such that
  \begin{equation}
    (J_u)^n_{x_n}=\inf_{x\in M}(J_u)^n_x.
  \end{equation}
  Let $\varepsilon>0$.  By the specification property \cite[Theorem
  18.3.9]{katok1997introduction}, there exists $M\in\ma N$ and a
  periodic point $x_n'$ of period $n+M$ such that
  \begin{equation}
    \forall\ 0\leq j\leq n-1,\ d(T^j(x_n),T^j(x_n'))\leq\varepsilon.
  \end{equation}
  Thus, for some $C>0$,
  \begin{equation}
    \frac1n\inf_{x\in
      M}(J_u)^n_x\leq\frac1n\inf_{T^{n+M}(x)=x}(J_u)^n_x\leq\frac1n\inf_{x\in
      M}(J_u)^n_x+C\varepsilon.
  \end{equation}
  Then, we have
  \begin{equation}
    \begin{split}
      \frac1{n+M}\inf_{x\in
        M}(J_u)^{n+M}_x-C\frac{M}{n}&\leq\frac1{n+M}\inf_{T^{n+M}(x)=x}(J_u)^{n+M}_x\\
      &\leq\frac1{n+M}\inf_{x\in
        M}(J_u)^{n+M}_x+C\varepsilon+C\frac{M}{n}.
    \end{split}
  \end{equation}
  As a consequence,
  \begin{equation}
    \inf_{T^n(x)=x}\frac1n(J_u)^n_x\underset{n\to\infty}{\longrightarrow}J_u^{\min}.
  \end{equation}
\end{proof}
\begin{lemma}
  \begin{equation}
    F(\beta)\underset{\beta\to\infty}{\longrightarrow}-J_u^{\min}
  \end{equation}
\end{lemma}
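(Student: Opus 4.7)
The plan is to squeeze $F(\beta)$ between two elementary bounds obtained by comparing the pressure sum $\sum_{T^n(x)=x} e^{-\beta (J_u)^n_x}$ to its maximum term, both from above (all terms dominated by the maximum, multiplied by the cardinality) and from below (keeping only the maximum term).

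For the upper bound, I would write
\begin{equation}
\sum_{T^n(x)=x} e^{-\beta (J_u)^n_x} \leq \#\mathrm{Per}(n)\cdot e^{-\beta \inf_{T^n(x)=x}(J_u)^n_x},
\end{equation}
take logarithms, divide by $n$, pass to the limit $n\to\infty$, and use Proposition \ref{corollaire card per(n)} together with the previous lemma identifying the limit of the periodic infimum with $J_u^{\min}$. This yields $\mathrm{Pr}(-\beta J_u)\leq h_{\mathrm{top}}-\beta J_u^{\min}$, hence $F(\beta)\leq -J_u^{\min}+h_{\mathrm{top}}/\beta$.

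For the lower bound, picking for each $n$ a periodic minimizer $x_n^*\in\mathrm{Per}(n)$ gives
\begin{equation}
\sum_{T^n(x)=x} e^{-\beta (J_u)^n_x} \geq e^{-\beta (J_u)^n_{x_n^*}},
\end{equation}
so $\frac1n\log \sum \geq -\beta\cdot\frac1n(J_u)^n_{x_n^*}$. Passing to the limit and invoking the previous lemma again yields $\mathrm{Pr}(-\beta J_u)\geq -\beta J_u^{\min}$, i.e.\ $F(\beta)\geq -J_u^{\min}$ for every $\beta>0$. Combining both bounds,
\begin{equation}
-J_u^{\min} \leq F(\beta) \leq -J_u^{\min}+\frac{h_{\mathrm{top}}}{\beta},
\end{equation}
which forces $F(\beta)\to -J_u^{\min}$ as $\beta\to\infty$.

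I do not anticipate a serious obstacle: the only subtle ingredient is the identification of $J_u^{\min}$ with the limit over periodic orbits, which has already been established in the previous lemma via the specification property. Everything else is a direct $\log$-sum-exp estimate. The only mild care is to justify that the inner limits in $n$ can be taken termwise (for the upper bound, both $\frac1n\log\#\mathrm{Per}(n)\to h_{\mathrm{top}}$ and $\inf_{T^n(x)=x}\frac1n(J_u)^n_x\to J_u^{\min}$ exist, so the inequality survives the limit).
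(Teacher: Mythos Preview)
Your proposal is correct and follows essentially the same approach as the paper: both arguments squeeze $F(\beta)$ between $-J_u^{\min}$ and $-J_u^{\min}+h_{\mathrm{top}}/\beta$ by bounding the pressure sum $\sum_{T^n(x)=x}e^{-\beta(J_u)^n_x}$ below by its largest term and above by $\#\mathrm{Per}(n)$ times that term, then invoking the previous lemma to identify the periodic infimum with $J_u^{\min}$. The only difference is cosmetic---the paper phrases the limit passage via an explicit $\varepsilon$ while you pass to the limit directly---so there is nothing to add.
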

\begin{proof}
Let 
  \begin{equation}
    F_n(\beta):=\frac1{n\beta}\log\sum_{T^n(x)=x}e^{-\beta (J_u)^n_x}\underset{n\to\infty}{\longrightarrow}F(\beta).
  \end{equation}
  Let $\varepsilon>0$.
  If $n$ is large enough, for every periodic point $x$ of period $n$
  \begin{equation}
    (J_u)^n_x\geq n(J_u^{\min}-\varepsilon).
  \end{equation}
  Moreover, there exists a periodic point of period $n$ such that
  \begin{equation}
    (J_u)^n_x\leq n(J^{\min}+\varepsilon).
  \end{equation}
  So
  \begin{equation}
    e^{-\beta n(J_u^{\min}+\varepsilon)}\leq\sum_{T^n(x)=x}e^{-\beta
      (J_u)^n_x}\leq e^{nh_{\mathrm{top}}}e^{-\beta n(J_u^{\min}+\varepsilon)}.
  \end{equation}
  Taking the logarithm and taking $\varepsilon\to0$ gives for every
  $\beta>0$
  \begin{equation}
    -J_u^{\min}\leq F(\beta)\leq\frac{h_{\mathrm{top}}}\beta-J_u^{\min},
  \end{equation}
  hence the result.
\end{proof}
The proof of Lemma \ref{lemme pression} then derives from the
following facts:
\begin{equation}\label{sec:end-proof-lemma}
  \sup_{O\in\mathrm{Per}(n)}\frac1{|\det{(1-\mathrm dT^n_O)}|}=e^{-nJ_u^{\min}}(1+o(1))=e^{n\lim_{\beta\to\infty}F(\beta)+o(n)}
\end{equation}
and
\begin{equation}\label{eq:fin}
  A_n=e^{nF(2)+o(n)}.
\end{equation}
(\ref{sec:end-proof-lemma}) is clear,
and the definition (\ref{A_n}) implies that
\begin{equation}
  \frac1{\sqrt n}\left(\sum_{T^n(x)=x}e^{-n(J_u)^n_x}(1+o(1))\right)^{-\frac12}\leq A_n\leq\left(\sum_{T^n(x)=x}e^{-n(J_u)^n_x}(1+o(1))\right)^{-\frac12}.
\end{equation}

\section{Proof of Proposition \ref{proposition regularite champs gaussiens}}

\label{annexe champs gaussiens}

These results are based on the fact that if $\zeta_j$ are i.i.d.
centered Gaussian variables of variance $1$, then, almost surely, for
every $\varepsilon>0,$
\begin{equation}\label{croissance famille gaussienne}
\zeta_j=o(j^\varepsilon).
\end{equation}
It is a consequence of Borel-Cantelli Lemma:
\begin{equation}
  \ma P\left(|\zeta_j|>j^\varepsilon\right)\leq\frac2{\sqrt{2\pi}}e^{-j^{2\varepsilon}/2},
\end{equation}
it is thus summable.
The second ingredient is Weyl's law (\ref{eq:Weyl law}): the
asymptotics of the sequence of eigenvalues $(\lambda_j)$ of the
Laplace-Beltrami opertor is given by
\begin{equation}\label{Loi de
Weyl} \lambda_j\sim Cj^{\frac{2}{d}}
\end{equation}
for some constant $C$ depending only on $M$ (and the
metric).

\begin{proof}[Proof of Proposition \ref{proposition regularite champs
    gaussiens}]
  If $c_j=O(j^{-\alpha})$, then for $s<d(\alpha-\frac12)$,
or equivalentely $\alpha>\frac sd+\frac 12$, almost surely,

\begin{equation} |c_j\zeta_j|^2\underset{(\ref{croissance famille
gaussienne})}{=}O(j^{-2s/d-1-\delta})
\end{equation} for some $\delta>0.$ Thus,

\begin{equation} \sum_j|c_j\zeta_j|^2j^{2\frac s d}
\end{equation}
converges, and so does
\begin{equation} \sum_j |c_j\zeta_j|^2 (1+\lambda_j)^s
\end{equation}
by Weyls law (\ref{Loi de Weyl}).  Consequently, almost surely,
\begin{equation} (1+\Delta)^{s/2} f=\sum_j c_j \zeta_j
(1+\lambda_j)^{\frac{s}{2}} \phi_j\in L^2.
\end{equation}

In other terms, $f$ belongs almost surely to $H^s$.  The statement
about $\mc C^k(M)$ follows from the classical Sobolev embedding
theorem:
\begin{theorem}[{\cite[Proposition 3.3
p.282]{taylor1996partial}}]\label{lemme continuite champs gaussiens}
For $s>k+d/2$,
\begin{equation} H^s(M)\subset \mc C^k(M).
\end{equation}
\end{theorem}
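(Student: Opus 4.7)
The plan is to reduce to the Euclidean setting via a partition of unity and then apply the Fourier characterization of $H^s(\ma R^d)$. First, I would fix a finite atlas $(U_\alpha,\kappa_\alpha)$ on $M$ with a subordinate partition of unity $(\chi_\alpha)$ and check that $f\in H^s(M)$ (in the spectral sense $(1+\Delta)^{-s/2}L^2(M)$ used here) if and only if $(\chi_\alpha f)\circ\kappa_\alpha^{-1}\in H^s(\ma R^d)$ for each $\alpha$, with comparable norms; similarly $f\in\mc C^k(M)$ iff each localization lies in $\mc C^k$ of its Euclidean chart. This reduces the claim to: any compactly supported $u\in H^s(\ma R^d)$ with $s>k+d/2$ lies in $\mc C^k(\ma R^d)$.

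For the Euclidean case with $k=0$, Fourier inversion writes $u(x)=(2\pi)^{-d/2}\int\hat u(\xi)e^{ix\cdot\xi}\,d\xi$. Splitting $\hat u(\xi)=(1+|\xi|^2)^{-s/2}\cdot(1+|\xi|^2)^{s/2}\hat u(\xi)$ and applying Cauchy--Schwarz,
\begin{equation*}
|u(x)|\leq(2\pi)^{-d/2}\left(\int_{\ma R^d}(1+|\xi|^2)^{-s}\,d\xi\right)^{1/2}\|u\|_{H^s}.
\end{equation*}
The integral converges precisely when $2s>d$, which is the hypothesis. Continuity then follows from dominated convergence applied to the Fourier integral, or by approximating $u$ in $H^s$ by Schwartz functions and noting that uniform convergence preserves continuity.

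For general $k$, one uses $\widehat{\partial^\alpha u}(\xi)=(i\xi)^\alpha\hat u(\xi)$, so $\partial^\alpha u\in H^{s-|\alpha|}(\ma R^d)$ for each multi-index $\alpha$. When $|\alpha|\leq k$ and $s>k+d/2$, one has $s-|\alpha|>d/2$, and the $k=0$ case yields $\partial^\alpha u\in\mc C^0(\ma R^d)$. Since this holds for every $|\alpha|\leq k$, we conclude $u\in\mc C^k(\ma R^d)$, which pushes back to $f\in\mc C^k(M)$ via the partition of unity.

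The main subtlety is the initial reduction itself: checking that the spectral definition of $H^s(M)$ agrees, up to equivalent norms, with the localized Fourier definition. This rests on elliptic regularity for $\Delta$ together with commutator estimates between $(1+\Delta)^{s/2}$ and multiplication by the cutoffs $\chi_\alpha$. Once this equivalence is in hand, the argument above is elementary Fourier analysis; all the analytic content sits in matching the two definitions of the Sobolev scale on $M$.
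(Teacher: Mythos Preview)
Your outline is the standard textbook argument and is correct in substance. However, the paper does not prove this statement at all: it is stated as a theorem with a citation to Taylor's \emph{Partial Differential Equations} (Proposition~3.3, p.~282) and is invoked as a black box at the end of the proof of Proposition~\ref{proposition regularite champs gaussiens}. There is nothing to compare your argument against; the author simply treats the Sobolev embedding as a known result.

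If you want feedback on the proposal itself: the Euclidean part is fine and complete. The reduction step you flag as the ``main subtlety'' is indeed where the work lies, and you have identified the right ingredients (elliptic regularity for $\Delta$, pseudodifferential calculus or commutator estimates to pass between $(1+\Delta)^{s/2}$ and the localized Bessel-potential norm). For a self-contained writeup you would need to actually carry this out or cite it; as written it is an honest sketch but not a proof of that equivalence.
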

\end{proof}
\section{Ruelle spectrum and flat trace}
\label{annexe spectre de Ruelle}

\begin{theorem}[{\cite[Theorem 5.1]{baladi2018dynamical}}] In our
  setting, if $\tau$ is $\mc C^k$, there
  exists a Banach space $B^k$ such that
    \begin{itemize}
    \item $\mathcal L_{\xi,\tau}:B^k\longrightarrow B^k$ is a bounded
      operator,
    \item For any $\alpha>\frac k2$,
      \begin{equation}\label{espace B^s} \mathcal
        C^\alpha(M)\subset B^k\subset\mathcal C^\alpha(M)',
      \end{equation}
    \item The spectral radius $r(\mathcal L_{\xi,\tau})$ of $\mathcal
      L_{\xi,\tau}$ in $B^k$ is
      \begin{equation}\label{} r(\mathcal L_{\xi,\tau})\leq
        1,
      \end{equation}
    \item The essential spectral radius $r_{ess}(\mathcal L_{\xi,\tau})$
      of $\mathcal L_{\xi,\tau}$ in $B^k$ satisfies
      \begin{equation}\label{borne r_ess}
        r_{ess}(\mathcal L_{\xi,\tau})\leq m^k,
      \end{equation}
      where $\lambda$ is the smallest constant satisfying the
      definition of Anosov diffemorphism from Section \ref{sec:model}.
    \end{itemize}
  \end{theorem}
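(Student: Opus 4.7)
This statement is a standard spectral estimate for transfer operators of Anosov diffeomorphisms; my plan would be to reproduce the anisotropic-Banach-space construction of Gou\"ezel-Liverani, as presented in \cite{baladi2018dynamical}. The idea is that any $B^k$ with the desired properties must resolve the asymmetry of the dynamics: along the unstable direction, where $T$ expands, $u$ should be measured in a \emph{negative} order of regularity (as a distribution of order $k$), while along the stable direction, where $T$ contracts, $u$ should be measured in a \emph{positive} H\"older regularity of order $k$. Then each application of $\mathcal L_{\xi,\tau}$ gains a factor $\lambda^k$ from the chain rule applied after pullback, and the essential spectral radius estimate follows from a Lasota-Yorke argument.

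Concretely, I would cover $M$ by a finite atlas in which the Anosov splitting is approximately constant, with a subordinate partition of unity. In each chart, local norms are built by pairing $u$ with $\mathcal C^k$ test forms supported on unstable plaques and taking a supremum over plaques and over test forms of $\mathcal C^k$-norm bounded by one, combined with a H\"older modulus of order $k$ in the stable direction. These local norms glue to a global norm $\|\cdot\|_{B^k}$. The inclusion $\mathcal C^\alpha(M)\subset B^k\subset(\mathcal C^\alpha(M))'$ for $\alpha>k/2$ is essentially immediate from this definition. Boundedness of $\mathcal L_{\xi,\tau}$ on $B^k$ reduces to a change of variables: the weight $e^{i\xi\tau}$ is a smooth multiplier and the pull-back by $T$ maps unstable plaques to unstable plaques; this also yields $r(\mathcal L_{\xi,\tau})\leq 1$, since the weight has modulus one and the natural $L^\infty$-type quantity is preserved.

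The heart of the proof---and the main obstacle---is to establish a Lasota-Yorke inequality of the form
\begin{equation}
\|\mathcal L_{\xi,\tau}^n u\|_{B^k}\leq C\lambda^{kn}\|u\|_{B^k}+C_n\|u\|_{\mathrm{weak}},
\end{equation}
where $\|\cdot\|_{\mathrm{weak}}$ is an auxiliary norm compact in $B^k$; Hennion's theorem then gives the essential spectral radius bound (the $m^k$ in the statement, with $m$ a constant built from $\lambda$). The key estimate is that, after pulling back a $\mathcal C^k$ test form by $T^{-n}$ in unstable coordinates, the new test form still lives in $\mathcal C^k$ but with norm shrunk by the hyperbolicity factor $\lambda^{kn}$, up to a lower-order remainder absorbed by the weak norm. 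Making this rigorous requires careful book-keeping of the H\"older regularity of the unstable foliation and uniform distortion estimates for $T^n$; this is the technical core of \cite[Chapter 5]{baladi2018dynamical}, to which I would defer for details.
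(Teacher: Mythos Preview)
The paper does not prove this statement at all: it is quoted verbatim as \cite[Theorem 5.1]{baladi2018dynamical} in an appendix, with no accompanying argument. Your sketch is a reasonable outline of the Gou\"ezel--Liverani/Baladi construction that underlies the cited result, so in that sense there is nothing to compare; you are supplying a proof where the paper deliberately defers to the literature.

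One small caution on your write-up: the statement as printed has an evident typo (the bound reads $m^k$ but the explanation refers to $\lambda$), and your parenthetical ``with $m$ a constant built from $\lambda$'' papers over this rather than resolving it. If you are going to sketch the Lasota--Yorke step, you should state clearly that the contraction factor is $\lambda^k$ (or the analogous quantity built from the hyperbolicity constants), not introduce a new undefined $m$.
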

This implies that the resolvent of the transfer operator admits a
meromorphic extension as an operator $\mathcal
C^\infty(M)\longrightarrow\mc D'(M)$ to the region
\begin{equation}
  \{|z|>m^k\}
\end{equation}
and that its poles in this domain are the eigenvalues of
$\mc L_{\xi,\tau}$ with same multiplicities and spectral
projectors.  These poles are called Ruelle-Pollicott resonances.  For
analytic Anosov maps, there are spaces in which the transfer operator
is trace-class.  There is however no hope for this to be true with
less regularity.  Yet, an analog of the trace can be defined, that
coincides with the usual trace in the analytic case:
\begin{lemma}[{\cite[Section 6.2]{baladi2018dynamical}}]
  The operators $\mc L^n_{\xi,\tau},n\geq 1$ have integral kernel

\begin{equation} K_{\xi,\tau}^n(x,y)=e^{i\xi\tau_x^n}\delta(y-T^n(x)),
\end{equation}
where
\begin{equation} \tau^n_x:=\sum_{j=0}^{n-1}\tau(T^j(x)).
\end{equation}
Introducing a mollification, the distribution $K^n_{\xi,\tau}$ can be
integrated along the diagonal $\{(x,x),x\in M\}$.  The resulting
quantity is called flat trace of $\mc L^n_{\xi,\tau} $ and can be
expressed as a sum over periodic points:
\begin{equation}\label{trace}\mathrm{Tr}^\flat\mc L^n_{\xi,\tau}:=
\int_{M} K^n_{\xi,\tau}(x,x)dx =
\sum_{x,T^n(x)=x}\frac{e^{i\xi\tau_x^n}}{\left|\det(1-d(T^n)_x)\right|}.
\end{equation}
\end{lemma}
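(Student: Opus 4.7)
The plan is to verify the claim in three stages: first identify the distributional kernel directly; then regularize the diagonal restriction by mollification; and finally evaluate the limit by localizing to fixed points of $T^n$ and changing variables.

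For the kernel, the computation is essentially by inspection. Since $\mathcal L_{\xi,\tau}u(x)=e^{i\xi\tau(x)}u(T(x))$, an immediate induction gives $\mathcal L^n_{\xi,\tau}u(x)=e^{i\xi\tau^n_x}u(T^n(x))$. Rewriting $u(T^n(x))=\int_M u(y)\,\delta(y-T^n(x))\,dy$ identifies $K^n_{\xi,\tau}(x,y)=e^{i\xi\tau^n_x}\delta(y-T^n(x))$ as a distribution on $M\times M$.

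To make sense of the restriction to the diagonal, I would fix a smooth non-negative approximate identity $\rho_\varepsilon\in\mathcal C_c^\infty(M\times M)$ concentrating on $\{(x,x)\}$ as $\varepsilon\to0$, and set
\begin{equation}
\mathrm{Tr}^\flat_\varepsilon(\mathcal L^n_{\xi,\tau})=\int_{M\times M}K^n_{\xi,\tau}(x,y)\,\rho_\varepsilon(x,y)\,dxdy=\int_M e^{i\xi\tau^n_x}\rho_\varepsilon(x,T^n(x))\,dx.
\end{equation}
The Anosov property is crucial here: at any fixed point $x_0$ of $T^n$, the splitting $TM=E^u\oplus E^s$ and condition $(2)$ of Section~\ref{sec:model} force $dT^n_{x_0}$ to have all its eigenvalues off the unit circle, so $1-dT^n_{x_0}$ is invertible. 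By the inverse function theorem, $\Phi(x):=x-T^n(x)$ (read in a chart centered at $x_0$) is a local diffeomorphism near $x_0$ with $|\det d\Phi_{x_0}|=|\det(1-dT^n_{x_0})|$, and consequently $\mathrm{Fix}(T^n)$ is a finite set by compactness of $M$.

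Away from $\mathrm{Fix}(T^n)$, $\Phi$ stays uniformly away from $0$, so the integral restricted to the complement of a fixed tubular neighborhood of the fixed-point set vanishes as $\varepsilon\to0$. Near each $x_0$, a change of variables $u=\Phi(x)$ gives
\begin{equation}
\int e^{i\xi\tau^n_x}\rho_\varepsilon(x,T^n(x))\,dx\;\underset{\varepsilon\to0}{\longrightarrow}\;\frac{e^{i\xi\tau^n_{x_0}}}{|\det(1-dT^n_{x_0})|},
\end{equation}
since $\rho_\varepsilon$ pulled back by $\Phi^{-1}$ is still an approximate identity and $\tau^n$ is continuous. Summing the finitely many contributions produces the claimed formula.

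The only delicate point is making the mollification intrinsic on the manifold (one can use the Riemannian distance together with a partition of unity, or work in exponential charts centered on fixed points); the dynamics enters only through hyperbolicity, which guarantees both invertibility of $1-dT^n_{x_0}$ and isolation of fixed points. I expect this technical bookkeeping, rather than any conceptual difficulty, to be the main hurdle; alternatively one can simply appeal to the standard Atiyah–Bott-type computation as presented in \cite{baladi2018dynamical}.
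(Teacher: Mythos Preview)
The paper does not prove this lemma at all: it is stated as a known result with a citation to \cite{baladi2018dynamical}, Section~6.2, and no argument is supplied. Your sketch is therefore not competing with anything in the paper; it is supplying what the paper outsources to the reference.

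That said, your outline is the standard Atiyah--Bott computation and is correct in substance. The kernel identification is immediate, the hyperbolicity of $T$ indeed forces $1-dT^n_{x_0}$ to be invertible at fixed points (hence fixed points are isolated and finite), and the local change of variables $u=x-T^n(x)$ near each fixed point produces the Jacobian factor $|\det(1-dT^n_{x_0})|^{-1}$. Your closing remark that one can ``simply appeal to \cite{baladi2018dynamical}'' is in fact exactly what the paper does. The only caveat is that the mollification should be set up so that the limit is independent of the choice of approximate identity (equivalently, one checks that the wavefront set of $K^n_{\xi,\tau}$ is transverse to the conormal of the diagonal, which is again a consequence of the invertibility of $1-dT^n$); you gesture at this but do not make it explicit.
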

This notion of trace is linked to the eigenvalues of the operator in
the following way (\cite[Theorem 6.2]{baladi2018dynamical},
\cite[Theorem 2.4]{jezequel2017local}):
\begin{theorem} Let $\xi\in\ma R$,
let $\varepsilon>0$ be such that $\mc L_{\xi,\tau}$ has no resonance
of modulus $\varepsilon$, then
\begin{equation}
\label{baladi_eq} \exists C_\xi>0,\forall n\in \ma
N,\left|\mathrm{Tr}^\flat\mc L^n_{\xi,\tau} -
\sum\limits_{\substack{\lambda\in\mathrm{Res}(\mc
L_{\xi,\tau})\\\abs{\lambda}>\varepsilon}}\lambda^n \right|\leq
C_\xi\varepsilon^n.
\end{equation}
\end{theorem}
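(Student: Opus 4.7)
The plan is to exploit the spectral decomposition of $\mc L_{\xi,\tau}$ on the Banach space $B^k$ from the preceding theorem, together with a Lefschetz-type identification of the flat trace with the usual trace on the finite-dimensional spectral subspaces corresponding to resonances of modulus exceeding $\varepsilon$.

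First, since $r_{\mathrm{ess}}(\mc L_{\xi,\tau})\leq m^k$ and $r(\mc L_{\xi,\tau})\leq 1$, and since by hypothesis $\varepsilon$ avoids the (discrete) spectrum outside the essential spectral radius, the Riesz spectral projector
\[
\Pi_\varepsilon := \frac{1}{2\pi i}\oint_{|z|=\varepsilon} (z-\mc L_{\xi,\tau})^{-1}\,\mathrm dz
\]
is a well-defined bounded operator of finite rank on $B^k$. This produces a commuting decomposition $\mc L_{\xi,\tau}^n = P_n + R_n$ where $P_n:=\mc L_{\xi,\tau}^n\Pi_\varepsilon$ has rank bounded uniformly in $n$ with spectrum $\{\lambda\in\mathrm{Res}(\mc L_{\xi,\tau}):|\lambda|>\varepsilon\}$, and $R_n:=\mc L_{\xi,\tau}^n(I-\Pi_\varepsilon)$ has operator norm on $B^k$ of order $O_\xi(\varepsilon^n)$ by standard spectral-radius arguments (the spectral radius of $(I-\Pi_\varepsilon)\mc L_{\xi,\tau}$ on the complementary invariant subspace is strictly less than $\varepsilon$).

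Second, I would establish $\mathrm{Tr}^\flat(P_n)=\sum_{|\lambda|>\varepsilon}\lambda^n$. Since $P_n$ is finite rank and its range consists of generalized eigenfunctions of $\mc L_{\xi,\tau}$, which are elements of $B^k\subset\mc C^\alpha(M)'$ by (\ref{espace B^s}), the mollification defining $\mathrm{Tr}^\flat$ converges unambiguously to the usual finite-dimensional trace on this piece. That trace is $\sum_{|\lambda|>\varepsilon}\lambda^n$ counted with algebraic multiplicity.

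The main obstacle, and the step I expect to be hardest, is showing $|\mathrm{Tr}^\flat(R_n)|\leq C_\xi\varepsilon^n$. The flat trace is defined via the pairing of the distributional Schwartz kernel $K^n_{\xi,\tau}$ with a mollified diagonal, so it does not automatically extend continuously in the $B^k$-operator norm. One must verify that it extends continuously to the closed ideal containing the $R_n$'s with comparison constant independent of $n$: this relies on the anisotropic structure of $B^k$, the embedding (\ref{espace B^s}), and the transversality of the graph of $T^n$ with the diagonal in $M\times M$ (ensured by the Anosov condition from Section \ref{sec:model}, which uniformly bounds $|\det(1-dT^n_O)|$ away from zero on periodic orbits and controls the wavefront-set of $K^n_{\xi,\tau}$ near the diagonal). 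Once such a continuity estimate is in hand, the triangle inequality
\[
\bigl|\mathrm{Tr}^\flat\mc L^n_{\xi,\tau} - \textstyle\sum_{|\lambda|>\varepsilon}\lambda^n\bigr| = \bigl|\mathrm{Tr}^\flat(R_n)\bigr|\leq C_\xi\varepsilon^n
\]
follows, with $C_\xi$ absorbing $\|\Pi_\varepsilon\|_{B^k}$, the embedding constants of (\ref{espace B^s}), and the mollification cost from the definition of the flat trace in the appendix.
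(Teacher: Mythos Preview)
The paper does not prove this theorem at all: it is stated in Appendix~\ref{annexe spectre de Ruelle} purely as a citation of \cite[Theorem 6.2]{baladi2018dynamical} and \cite[Theorem 2.4]{jezequel2017local}, with no argument given. So there is no in-paper proof to compare your proposal against.

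That said, your outline is the correct architecture and matches the approach in the cited references: Riesz-project onto the finitely many resonances outside the disc of radius $\varepsilon$, identify the flat trace with the ordinary trace on that finite-rank piece, and control the remainder. You are also right that the entire difficulty sits in the third step. Your sketch of that step, however, is not yet a proof. The flat trace is \emph{not} continuous for the operator norm on $B^k$, so the sentence ``Once such a continuity estimate is in hand'' hides exactly the content of the theorem. In the references this is handled either via a kneading/nuclear-power decomposition (Baladi) or via approximation-number estimates in adapted trace-class ideals (J\'ez\'equel): one shows that $R_n$ can be written as a sum of operators whose Schwartz kernels are controlled in a norm for which the diagonal pairing \emph{is} continuous, with the sum of those norms bounded by $C_\xi\varepsilon^n$. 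Simply invoking transversality of the graph of $T^n$ with the diagonal and the embeddings \eqref{espace B^s} is not sufficient on its own; those ingredients enter, but the actual mechanism is the decomposition of $R_n$ itself, not a blanket continuity of $\mathrm{Tr}^\flat$ on some ideal. If you want to turn your sketch into a self-contained proof, that is the missing piece you would need to supply.
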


\bibliographystyle{amsalpha}

\newpage \bibliography{main}

\end{document}